\renewcommand*{\backrefalt}[4]{%
    \ifcase #1 \footnotesize{(Not cited.)}%
    \or        \footnotesize{(Cited on page~#2.)}%
    \else      \footnotesize{(Cited on pages~#2.)}%
    \fi}
\newtheorem{theorem}{Theorem}[section]
\newtheorem{lemma}[theorem]{Lemma}
\newtheorem{definition}[theorem]{Definition}
\newtheorem{remark}[theorem]{Remark}
\numberwithin{equation}{section}
\newcommand{\sign}{\textnormal{sign}}
\newcommand{\op}{\textnormal{op}}
\newcommand{\argmin}{\mathop{\rm argmin}}
\newcommand{\ECal}{\mathcal{E}}
\newcommand{\HCal}{\mathcal{H}}
\newcommand{\XCal}{\mathcal{X}}
\newcommand{\br}{\mathbb{R}}
\newcommand{\ba}{\begin{array}}
\newcommand{\ea}{\end{array}}
\begin{document}


\begin{center}

{\bf{\LARGE{A Continuous-Time Perspective on Global Acceleration \\ [.2cm] for Monotone Equation Problems}}}

\vspace*{.2in}
{\large{ \begin{tabular}{c}
Tianyi Lin$^\ddagger$ \and Michael I. Jordan$^{\diamond, \dagger}$ \\
\end{tabular}
}}

\vspace*{.2in}

\begin{tabular}{c}
Department of Industrial Engineering and Operations Research, Columbia University$^\ddagger$ \\
Department of Electrical Engineering and Computer Sciences$^\diamond$ \\
Department of Statistics$^\dagger$ \\
University of California, Berkeley
\end{tabular}

\vspace*{.2in}

\today

\vspace*{.2in}

\begin{abstract}
We propose a new framework to design and analyze accelerated methods that solve general monotone equation (ME) problems in the form of $F(x) = 0$ where $F$ is a continuous and \textit{monotone} operator. Traditional approaches include generalized steepest descent methods~\citep{Hammond-1987-Generalized} and inexact Newton-type methods~\citep{Solodov-1999-Globally}. If $F$ is \textit{uniformly monotone} and \textit{twice differentiable}, the former methods are shown to achieve local linear convergence while the latter methods attain a guarantee of local superlinear convergence. The latter methods are also globally convergent thanks to line search and hyperplane projection. However, a global convergence rate is unknown for these methods. Since the ME problems are equivalent to the unconstrained variational inequality (VI) problems,  VI methods can be applied to yield a global convergence rate that is expressed in terms of the residue function $\|F(x)\|$. Indeed, the optimal rate among first-order methods is $O(1/k)$ if $F$ is Lipschitz continuous and was achieved by an anchored extragradient method~\citep{Yoon-2021-Accelerated}. However, these existing results are restricted to first-order methods and ME problems with a Lipschitz continuous operator. It has not been clear how to obtain global acceleration using high-order Lipschitz continuity of $F$. 

We take a \textit{continuous-time} perspective in which accelerated methods are viewed as the discretization of dynamical systems. Our contribution is to propose \textit{accelerated rescaled gradient systems} and prove that they are equivalent to closed-loop control systems~\citep{Lin-2023-Monotone}. Based on this connection, we establish the desired properties of solution trajectories of our new systems. Moreover, we provide a unified algorithmic framework obtained from discretization of continuous-time systems, which together with two different approximation subroutines yields both the existing high-order methods~\citep{Lin-2022-Perseus} and new first-order methods. This highlights the advantage of our new systems over the closed-loop control systems since the resulting methods do not require line search at each iteration. We prove that the $p^\textnormal{th}$-order method achieves a global rate of $O(k^{-p/2})$ in terms of $\|F(x)\|$ if $F$ is \textit{$p^\textnormal{th}$-order Lipschitz continuous} and the first-order method can achieve the same rate if $F$ is \textit{$p^\textnormal{th}$-order strongly Lipschitz continuous}. If $F$ is further strongly monotone, the restarted versions of these methods achieve local convergence with order $p$ when $p \geq 2$. Our discrete-time analysis is largely motivated by the continuous-time analysis and demonstrates the fundamental role that rescaled gradients play in global acceleration for solving ME problems.

\vspace{0.5cm}

\noindent {\bf Keywords:} Monotone Equation Problem, Global Convergence Rate, Accelerated Methods, Rescaled Gradient Systems, Open-Loop, Strong Lipschitz Continuity Condition. 
\end{abstract}

\vspace{0.3cm}

{\large \sf This paper is dedicated to the special issue in memory of Professor Hedy Attouch}

\end{center}

\newpage

\section{Introduction}
Monotone equations (MEs) capture a wide range of problems that include convex optimization problems, convex-concave saddle-point problems and computational models of equilibria in game-theoretic settings. Let $\HCal$ be a real Hilbert space and we define $F(x): \HCal \mapsto \HCal$ as a continuous operator. In the context of ME problem, we assume that $F$ is \textit{monotone}, i.e., $\langle F(x) - F(y), x- y \rangle \geq 0$ for any $x, y \in \HCal$, and want to find a point $x^\star \in \HCal$ such that
\begin{equation}\label{prob:main}
F(x^\star) = 0.  
\end{equation}
Note that the ME problem can be equivalently formulated as unconstrained variational inequality (VI) problems corresponding to $F$ and $\HCal$~\citep{Facchinei-2007-Finite}. In particular, we have that $x^\star \in \HCal$ is the solution to Eq.~\eqref{prob:main} if and only if we have
\begin{equation*}
\langle F(x^\star), x- x^\star\rangle \geq 0, \quad \textnormal{for all } x \in \HCal.
\end{equation*}
There are many application problems that can be directly cast into the ME framework, including problems in economics and game theory~\citep{Morgenstern-1953-Theory, Fudenberg-1998-Theory}, interval arithmetic~\citep{Moore-1979-Methods}, kinematics~\citep{Morgan-1987-Computing}, chemical engineering~\citep{Meintjes-1990-Chemical}, neurophysiology~\citep{Verschelde-1994-Homotopies} and other engineering problems~\citep{Morgan-2009-Solving}. For an overview of recent progress on solving nonlinear equations and many more applications, we refer to~\citet{Heath-2018-Scientific}. This line of research has recently been brought into contact with machine learning (ML), in the guise of optimality conditions for saddle-point problems, with concrete applications including robust prediction and regression~\citep{Xu-2009-Robustness, Esfahani-2018-Data}, adversarial learning~\citep{Goodfellow-2020-Generative}, online learning in games~\citep{Cesa-2006-Prediction} and distributed computing~\citep{Shamma-2008-Cooperative}. Emerging ML applications often involve multi-agent systems, multi-way markets, or social context, and this is driving increasing interest in equilibrium formulations that can be solved as ME problems.

The classical iterative method for solving the ME problems is so-called generalized steepest descent method~\citep{Bertsekas-1982-Constrained, Hammond-1987-Generalized}. Although this algorithm is not globally convergent for the general case where the Jacobian of $F$ is asymmetric,~\citet{Hammond-1987-Generalized} established a local linear convergence guarantee when $F$ is uniformly monotone and twice differentiable. The global convergence was proved when $F(x) = Mx - b$ and both $M$ and $M^2$ are positive definite. The subsequent work of~\citet{Magnanti-1997-Averaging} combined the generalized steepest descent method with averaging and proved the global convergence results under weaker assumption that $M^2$ is semidefinite. An alternative state-of-the-art method for solving the ME problem is the Newton method which enjoys a local superlinear convergence guarantee under certain regularity conditions~\citep{Kelley-1995-Iterative, Kelley-2003-Solving}. Higher-order methods were also proposed that improve upon Newton method---they are guaranteed to achieve local superquadratic convergence under the high-order Lipschitz continuity of $F$~\citep{Homeier-2004-Modified, Darvishi-2007-Third, Cordero-2007-Variants, Mcdougall-2014-Simple}. As for the global convergence, while it is indeed possible to give an asymptotic guarantee using line search and hyperplane projection~\citep{Solodov-1999-Globally}, the \textit{global convergence rate} for these methods remains unknown. 

Since the ME problems are equivalent to unconstrained variational inequality (VI) problems, the VI methods can be applied with a global convergence rate guarantee. Examples include the extragradient (EG) method~\citep{Korpelevich-1976-Extragradient, Nemirovski-2004-Prox}, optimistic gradient method~\citep{Popov-1980-Modification, Mokhtari-2020-Convergence}, dual extrapolation method~\citep{Nesterov-2007-Dual} and other methods~\citep{Solodov-1999-Hybrid, Monteiro-2010-Complexity, Monteiro-2011-Complexity}. All of these methods are first order and achieve the same rate of $O(1/k)$ in terms of \textit{a restricted gap function} if $F$ is Lipschitz continuous. In terms of a residue function $\|F(x)\|$, the rate of $O(1/\sqrt{k})$ has been derived for the EG method~\citep{Solodov-1999-Hybrid, Gorbunov-2022-Extragradient} and optimistic gradient method~\citep{Ryu-2019-ODE, Kotsalis-2022-Simple}. It was later improved to an optimal rate of $O(1/k)$ by an anchored EG method~\citep{Yoon-2021-Accelerated}. However, these existing results were restricted to first-order methods and the ME problem with a Lipschitz continuous operator. It remains unclear how to obtain global acceleration using high-order Lipschitz continuity of $F$. 

As noted by~\citet{Monteiro-2012-Iteration}, a global acceleration is obtained by second-order methods if $F$ is second-order Lipschitz continuous. More specifically, the $p^\textnormal{th}$-order methods can be used to solve the ME problems when $F$ is $p^\textnormal{th}$-order Lipschitz continuous~\citep{Bullins-2022-Higher, Jiang-2022-Generalized, Adil-2022-Optimal, Lin-2022-Perseus, Lin-2023-Monotone}. However, the convergence analysis depends on a heuristic Lyapunov function and the global rate is \textit{only} obtained in terms of a restricted gap function which is arguably \textit{less natural} than a residue function for the ME problems.  

This paper is motivated by the \textit{continuous-time} perspective of interpreting accelerated methods as the discretization of dynamical systems. In particular, we propose novel \textit{accelerated rescaled gradient systems} for characterizing global acceleration for ME problems in Eq.~\eqref{prob:main}. Formally, our new systems can be written as follows:
\begin{equation}\label{sys:main}
\begin{array}{lll}
\dot{s}(t) = - \tfrac{F(x(t))}{\|F(x(t))\|^{1-1/p}}, & v(t) = x_0 + s(t), & x(t) - v(t) + \tfrac{F(x(t))}{\|F(x(t))\|^{1-1/p}} = 0, 
\end{array} 
\end{equation}
where $\theta > 0$ and $p \in \{1, 2, \ldots\}$. The initial condition is $s(0) = 0$ and $x(0) = x_0 \in \{x \in \HCal \mid F(x) \neq 0\}$. Note that this is not a restrictive condition since $F(x) = 0$ implies that $x \in \HCal$ is a solution to Eq.~\eqref{prob:main}. Throughout the paper, unless otherwise indicated, we assume that 
\begin{quote}
\centering
\textit{$F: \HCal \rightarrow \HCal$ is continuous and monotone and Eq.~\eqref{prob:main} has at least one solution.}
\end{quote}
\paragraph{Our Contributions.} We summarize the main contributions of this paper as follows: 
\begin{enumerate}
\item We show that the accelerated rescaled gradient systems in Eq.~\eqref{sys:main} are equivalent to closed-loop control systems~\citep{Lin-2023-Monotone}. This result is surprising since our systems are open loop without any explicit control law involved. Based on this fact, we establish the desired properties of solution trajectories, including global existence and uniqueness (see Theorem~\ref{Thm:Global-Existence-Uniquess}) and asymptotic weak and strong asymptotic convergence (see Theorem~\ref{Thm:Rate-Asymptotic}). Our results also include a convergence rate estimate in terms of a residue function (see Theorem~\ref{Thm:Rate-Nonasymptotic}). 

\item We provide a unified algorithmic frameworks obtained from discretization of the system in Eq.~\eqref{sys:main}. When combined with two different approximate subroutines it yields the classical $p^\textnormal{th}$-order VI methods specialized to solve ME problems~\citep{Lin-2022-Perseus} and new first-order methods. This highlights the advantage of our new systems over the closed-loop control systems since the resulting methods do not require line search at each iteration. We prove that the $p^\textnormal{th}$-order method achieves a global rate of $O(1/k^{p/2})$ in terms of $\|F(x)\|$ if $F$ is \textit{$p^\textnormal{th}$-order Lipschitz continuous} (see Theorem~\ref{Thm:Global-Rate-High}) and the first-order method achieves the same global rate if $F$ is \textit{$p^\textnormal{th}$-order strongly Lipschitz continuous} (see Theorem~\ref{Thm:Global-Rate-First}). If $F$ is further strongly monotone, the restarted versions of these methods can achieve a local convergence with order $p$ when $p \geq 2$ (see Theorem~\ref{Thm:Local-Rate-Estimate}). All of these results are new to the best of our knowledge. 
\end{enumerate}
Our discrete-time analysis is largely motivated by the continuous-time analysis and demonstrates the fundamental role that rescaled gradients play in global acceleration for solving the ME problem. 

\paragraph{Additional Related Works.} The interplay between continuous-time dynamics and discrete-time methods has been a milestone in optimization theory. A flurry of recent work in convex optimization has focused on uncovering a general principle underlying Nesterov's accelerated gradient method (NAG) with a focus on interpreting the acceleration phenomenon using a temporal discretization of a continuous-time dynamical system with damping term~\citep{Su-2016-Differential, Wibisono-2016-Variational,Attouch-2016-Fast, Attouch-2018-Fast, Diakonikolas-2019-Approximate, Wilson-2021-Lyapunov, Muehlebach-2019-Dynamical,Muehlebach-2021-Optimization, Muehlebach-2022-Constraints,Shi-2022-Understanding,Lin-2022-Control}. Among the aforementioned works,~\citet{Wilson-2021-Lyapunov} found an unified time-dependent Lyapunov function and showed that their Lyapunov analysis is equivalent to Nesterov’s estimate sequence technique in a number of cases, including quasi-monotone subgradient, accelerated gradient descent and conditional gradient.

Extending the continuous-time dynamics and Lyapunov function analysis from convex optimization to monotone variational inequality and inclusion problems has been an active research area during the last two decades~\citep{Attouch-2011-Continuous, Mainge-2013-First, Abbas-2014-Newton, Attouch-2013-Global, Attouch-2016-Dynamic, Attouch-2020-Newton, Attouch-2021-Continuous, Lin-2023-Monotone}. In particular,~\citet{Attouch-2016-Dynamic} has proposed a second-order method for solving monotone inclusion problems but obtained a convergence rate estimate only in an optimization setting. \citet{Lin-2023-Monotone} proposed a closed-loop damping approach that generalizes~\citet{Attouch-2016-Dynamic} and gave high-order methods with convergence rate estimates for monotone inclusion problems. In this context, the Lyapunov function has an intuitive interpretation of being the distance between the solution trajectory and an optimal solution. 

Another line of works leveraged the asymptotic pseudo-trajectory (APT) theory~\citep{Benaim-1996-Asymptotic} to push the trajectory convergence in the continuous-time dynamics to the last-iterate convergence for stochastic discrete-time methods~\citep{Benaim-2005-Stochastic, Mertikopoulos-2018-Stochastic, Zhou-2017-Mirror, Zhou-2020-Convergence}. For example,~\citet{Zhou-2020-Convergence} examined the asymptotic convergence of stochastic mirror descent (SMD) in a class of optimization problems that are not necessarily convex and proved that SMD reaches a minimum point in a finite number of steps (a.s) in problems with sharp minima. In contrast, our work presented a unified framework to develop a class of deterministic first-order and high-order methods for monotone equation problems and derive their convergence rate in terms of residue norm.  

The closest work to ours is~\citet{Wilson-2019-Accelerating}, who derived novel first-order optimization methods via appeal to the discretization of \textit{rescaled gradient system} and proved that their accelerated methods can achieve the same global rate as accelerated high-order methods under a strong smoothness condition. While our systems in Eq.~\eqref{sys:main} are built upon the idea of scaling and our strong Lipschitzness condition is inspired by their condition,~\citet{Wilson-2019-Accelerating} considered a fundamentally different clas of dynamical systems without any continuous-time analysis and the derivations of their accelerated methods do not flow from a single underlying principle but tend to involve case-specific algebra.

\paragraph{Organization.} The remainder of the current paper is organized as follows. In Section~\ref{sec:system}, we establish  global existence and uniqueness results and analyze the convergence properties of solution trajectories. In Section~\ref{sec:algorithm}, we propose two algorithmic frameworks via appeal to the discretization of the accelerated rescaled gradient systems in Eq.~\eqref{sys:main}, one of which when combined with an approximate tensor subroutine recovers the existing $p^\textnormal{th}$-order VI methods and the other of which yields a new suite of simple first-order methods. In Section~\ref{sec:discrete}, we provide a global convergence rate estimate in terms of a residue function. In Section~\ref{sec:exp}, we conduct experiments on synthetic data to validate the efficiency of our methods. In Section~\ref{sec:conclusions}, we conclude the paper with a discussion on future research directions.

\paragraph{Notation.} We use lower-case letters such as $x$ to denote vectors, and upper-case letters such as $X$ to denote tensors.  We let $\HCal$ be a real Hilbert space which is endowed with an inner product $\langle \cdot, \cdot\rangle$. For $x \in \HCal$, we let $\|x\|$ denote the norm induced by $\langle \cdot, \cdot\rangle$. If $\HCal = \br^d$ is a real Euclidean space,  $\|x\|$ reduces to the $\ell_2$-norm of $x$.  Then, for $X \in \br^{d_1 \times d_2 \times \ldots \times d_p}$, we define 
\begin{equation*}
X[z^1, \cdots, z^p] = \sum_{1 \leq i_j \leq d_j, 1 \leq j \leq p} (X_{i_1, \cdots, i_p})z_{i_1}^1 \cdots z_{i_p}^p, 
\end{equation*}
and let $\|X\|_\op = \max_{\|z^i\|=1, 1 \leq j \leq p} X[z^1, \cdots, z^p]$.  Fixing $k \geq 1$, we say $F: \br^d \rightarrow \br^d$ is $k^\textnormal{th}$-order Lipschitz continuous if there exists a positive constant $L > 0$ such that $\|D^{(k-1)} F(x') - D^{(k-1)} F(x)\| \leq L\|x' - x\|$ for all $x, x' \in \br^d$. Here, $D^{(j)} F(x)$ stands for the $j^\textnormal{th}$-order derivative of $F$ at $x \in \br^d$ and $D^{(0)} F = F$; indeed, for $\{z_1, z_2, \ldots, z_j\} \subseteq \br^d$, we have
\begin{equation*}
D^{(j-1)} F(x)[z^1, \cdots, z^j] = \sum_{1 \leq i_1, \ldots, i_j \leq d} \left[\tfrac{\partial F_{i_1}}{\partial x_{i_2} \cdots \partial x_{i_j}}(x)\right] z_{i_1}^1 \cdots z_{i_j}^j. 
\end{equation*}
Lastly, the notation $a = O(b(k))$ stands for an upper bound $a \leq C \cdot b(k)$, where $C > 0$ is independent of the iteration count $k \in \{1, 2, \ldots\}$ and $a = \tilde{O}(b(k))$ indicates the same inequality where $C > 0$ might depend on the logarithmic factors of $k$.

\section{Continuous-Time Accelerated Rescaled Gradient Systems}\label{sec:system}
We begin by showing that the accelerated rescaled gradient systems in Eq.~\eqref{sys:main} are equivalent to the closed-loop control systems proposed by~\citet{Lin-2023-Monotone}.  Based on this fact, we use dynamical systems concepts to establish the existence and uniqueness of a global solution of Eq.~\eqref{sys:main} and to prove asymptotic weak and strong convergence of solution trajectories. We also propose a simple yet novel Lyapunov function which we employ to derive nonasymptotic global convergence rates in terms of a residue function. 

\subsection{Reformulation via closed-loop control}
We rewrite the accelerated rescaled gradient system in Eq.~\eqref{sys:main} as follows:
\begin{equation*}
\begin{array}{lll}
\dot{s}(t) = - \tfrac{F(x(t))}{\|F(x(t))\|^{1-1/p}}, & v(t) = x_0 + s(t), & x(t) - v(t) + \tfrac{F(x(t))}{\|F(x(t))\|^{1-1/p}} = 0. 
\end{array} 
\end{equation*}
Introducing the function $\lambda(t) = \frac{1}{\|F(x(t))\|^{1-1/p}}$, we have
\begin{equation}\label{Prelim:Equal-first}
\begin{array}{lll}
\dot{s}(t) = - \lambda(t) F(x(t)), & v(t) = x_0 + s(t), & x(t) - v(t) + \lambda(t) F(x(t)) = 0. 
\end{array}
\end{equation}
Since $s(t) = v(t) - x_0$ and $s(0) = 0$, we have $v(0) = x_0$ and 
\begin{equation*}
\dot{v}(t) = \dot{s}(t) = - \lambda(t) F(x(t)) = x(t) - v(t). 
\end{equation*}
In addition, we have $x(t) = (I + \lambda(t)F)^{-1} v(t)$.  Thus, we can rewrite Eq.~\eqref{Prelim:Equal-first} as 
\begin{equation*}
\left\{\begin{array}{ll}
& \dot{v}(t) = (I + \lambda(t)F)^{-1} v(t) - v(t), \\
& s(t) = v(t) - x_0, \\
& x(t) = (I + \lambda(t)F)^{-1} v(t). 
\end{array}\right. 
\end{equation*}
This implies that $(x, s)$ depend on the variables $(v, \lambda)$ explicitly and can be eliminated from the system. By the definition of $\lambda(\cdot)$, we have 
\begin{equation*}
1 = (\lambda(t))^p\|F(x(t))\|^{p-1} = \lambda(t)\|\lambda(t)F(x(t))\|^{p-1} = \lambda(t)\|\dot{v}(t)\|^{p-1}. 
\end{equation*}
Summarizing, the accelerated rescaled gradient systems in Eq.~\eqref{sys:main} can be equivalently reformulated as closed-loop control systems~\citep[cf.][]{Attouch-2016-Dynamic, Lin-2023-Monotone} as follows:
\begin{equation}\label{Prelim:Equal-second}
\left\{\begin{array}{ll}
& \dot{v}(t) = (I + \lambda(t)F)^{-1} v(t) - v(t), \\ 
& \lambda(t)\|\dot{v}(t)\|^{p-1} = 1, \\
& v(0) = x_0 \in \Omega = \{x \in \HCal \mid F(x) \neq 0\}.
\end{array} \right.
\end{equation}
\begin{remark}
The above relationship between the accelerated rescaled gradient systems in Eq.~\eqref{sys:main} and the closed-loop systems in Eq.~\eqref{Prelim:Equal-second} pave the way for proving global existence and uniqueness as well as asymptotic weak and strong convergence via appeal to the existing results for closed-loop control systems studied by~\citet[Theorem~2.7, 3.1 and 3.6]{Lin-2023-Monotone}. Such a closed-loop formulation also provides a pathway to establishing global convergence result results that are distinct from those employed in other rescaled gradient systems associated with convex optimization methods~\citep{Cortes-2006-Finite, Wibisono-2016-Variational, Wilson-2019-Accelerating, Romero-2020-Finite}. For further discussion of dynamical systems approaches to optimization, we refer to~\citet{Romero-2020-Finite}.
\end{remark}

\subsection{Main results}
We state our main theorem on the existence and uniqueness of a global solution. 
\begin{theorem}\label{Thm:Global-Existence-Uniquess}
The accelerated rescaled gradient system with a fixed value of $p \geq 1$ has a unique global solution, $(x, v, s): [0, +\infty) \mapsto \HCal \times \HCal \times \HCal$. In addition, we have that $x(\cdot)$ is continuous, $v(\cdot)$ and $s(\cdot)$ are continuously differentiable, and $\|F(x(t))\|^{1/p-1}$ is nondecreasing as a function of $t$. If $p \geq 2$, we have 
\begin{equation*}
\|F(x(t))\| \geq \|F(x_0)\| e^{-\frac{pt}{p-1}}, \quad \textnormal{for all } t \geq 0. 
\end{equation*}
\end{theorem}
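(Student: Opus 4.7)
The proof plan is to exploit the equivalence with the closed-loop reformulation in Eq.~\eqref{Prelim:Equal-second}, import the global existence/uniqueness machinery from \citet{Lin-2021-Monotone}, and then derive the two quantitative statements ($\lambda(t)$ monotonicity and exponential decay) by direct computation on the reformulated system. Concretely, I would verify that the closed-loop control system
\[
\dot v(t) = J_{\lambda(t)F}(v(t)) - v(t), \qquad \lambda(t)\|\dot v(t)\|^{p-1}=1, \qquad v(0)=x_0,
\]
fits the hypotheses of \citet[Theorem~2.7]{Lin-2021-Monotone}, where the local Lipschitz/monotonicity regularity needed to pass to the fixed-point formulation comes solely from continuity and monotonicity of $F$ together with firm nonexpansiveness of the resolvent $J_{\lambda F}$. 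Applying that theorem yields a unique $v \in C^1([0,+\infty);\HCal)$, from which the regularity of $s(t) = v(t) - x_0$ (continuously differentiable) and $x(t) = J_{\lambda(t)F}(v(t))$ (continuous, by continuity of $t \mapsto \lambda(t)$ and continuity of the resolvent in both arguments) follows immediately.

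Next I would establish that $\lambda(t) = \|F(x(t))\|^{1/p-1}$ is nondecreasing. Since we have the identity $\lambda(t)\|\dot v(t)\|^{p-1}=1$, this is equivalent to showing $t \mapsto \|\dot v(t)\|$ is nonincreasing. I would obtain this by mimicking \citet[Theorem~3.1]{Lin-2021-Monotone}: for $h>0$, use firm nonexpansiveness of the resolvent and the monotonicity of $\lambda \mapsto J_{\lambda F}$ along the trajectory to compare $\|v(t+h)-v(t)\|$ at two nearby times, conclude $\|\dot v(\cdot)\|$ is nonincreasing, and then invert the algebraic constraint.

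For the exponential decay bound when $p\geq 2$, the plan is to convert the estimate to one on $\|\dot v(t)\|$ via $\|F(x(t))\| = \|\dot v(t)\|^p$, and show $\|\dot v(t)\| \geq \|\dot v(0)\|e^{-t/(p-1)}$. Differentiating the algebraic constraint $\log\lambda(t) + (p-1)\log\|\dot v(t)\|=0$ yields
\[
\frac{d}{dt}\log\|\dot v(t)\| = -\frac{1}{p-1}\,\frac{\dot\lambda(t)}{\lambda(t)},
\]
so it suffices to prove $\dot\lambda(t) \leq \lambda(t)$. I would derive this from the closed-loop ODE directly: take the derivative of $v(t) = x(t) + \lambda(t) F(x(t))$ to eliminate $\dot x$ against $\dot v = -\lambda F(x)$, pair the resulting identity with $\dot v$ to produce a term of the form $\langle F(x) - F(y), x-y\rangle \geq 0$, and read off $\dot\lambda(t)/\lambda(t) \leq 1$. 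Integrating the resulting differential inequality for $\log\|\dot v(t)\|$ and raising to the $p$-th power gives the claim.

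The main obstacle is the last step: establishing $\dot\lambda(t) \leq \lambda(t)$. The monotonicity half ($\dot\lambda \geq 0$) is soft and follows from resolvent theory, but the upper bound is sharper and needs the full structure of the closed-loop coupling. The key technical point is extracting a usable monotonicity inequality from the possibly non-smooth $F$; I would circumvent non-differentiability of $F$ by working with difference quotients in $t$ and using the firm nonexpansiveness of $J_{\lambda F}$ uniformly in $\lambda$, passing to the limit via the continuity of $\lambda(\cdot)$ and $\dot v(\cdot)$ guaranteed by the existence theorem.
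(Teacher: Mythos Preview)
Your plan is correct and follows the same overall route as the paper: reduce Eq.~\eqref{sys:DE} to the closed-loop system Eq.~\eqref{Prelim:Equal-second} and import the existence/uniqueness theory from \citet{Lin-2021-Monotone}. The differences are in economy and in the treatment of two sub-points.

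First, the paper does not rederive the monotonicity of $\lambda(\cdot)$ or the exponential lower bound; it cites \citet[Lemma~2.11]{Lin-2021-Monotone} directly, which already delivers that $\lambda$ is locally Lipschitz, nondecreasing, and that $\|v(t) - J_{\lambda(t)F}v(t)\| \geq \|v(0) - J_{\lambda(0)F}v(0)\|\,e^{-t/(p-1)}$. Your plan to reprove these via $\dot\lambda \leq \lambda$ is exactly the content of that lemma, so you are rediscovering it rather than using it. Your sketch is sound, but note the regularity caveat: the cited result only gives $\lambda$ locally Lipschitz, not $C^1$, so the differentiation of $\log\lambda(t) + (p-1)\log\|\dot v(t)\| = 0$ must be done almost everywhere (Rademacher) and then integrated; your difference-quotient workaround handles this, but it should be stated.

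Second, for the continuity of $x(\cdot)$ the paper does not appeal to joint continuity of $(\lambda,v)\mapsto J_{\lambda F}(v)$. Instead it rewrites $x(t)$ as the unique zero of the strictly monotone map $\tilde F(\cdot) = F(\cdot) + \|\cdot - v(t)\|^{p-1}(\cdot - v(t))$ and runs a short sequential-compactness/uniqueness argument. Your resolvent-continuity route also works, but it hides a small lemma (continuity of $\lambda \mapsto J_{\lambda F}(v)$ for fixed $v$) that the paper's argument avoids entirely.
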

\begin{proof}
By~\citet[Theorem~2.7 and Lemma~2.11]{Lin-2023-Monotone}, the closed-loop control system in Eq.~\eqref{Prelim:Equal-second} has a unique global solution, $(v, \lambda): [0, +\infty) \mapsto \HCal \times (0, +\infty)$.  In addition, $v(\cdot)$ is continuously differentiable and $\lambda(\cdot)$ is locally Lipschitz continuous and nondecreasing. If $p \geq 2$, we have 
\begin{equation}\label{GEU:inequality-first}
\|v(t) - (I + \lambda(t)F)^{-1}v(t)\| \geq \|v(0) - (I + \lambda(0) F)^{-1}v(0)\|e^{-\frac{t}{p-1}}, \quad \textnormal{for all } t \geq 0. 
\end{equation}
Recall that Eq.~\eqref{sys:main} can be reformulated as Eq.~\eqref{Prelim:Equal-second} with the following transformations: 
\begin{equation*}
s(t) = v(t) - x_0, \quad x(t) = (I + \lambda(t)F)^{-1} v(t), \quad \lambda(t) = \tfrac{1}{\|F(x(t))\|^{1-1/p}}. 
\end{equation*}
Since $v(\cdot)$ is continuously differentiable, we have $s(\cdot)$ is continuously differentiable. Since $\lambda(\cdot)$ is locally Lipschitz continuous and nondecreasing, we obtain from the definition of $\lambda(t)$ that $\|F(x(t))\|^{1/p-1}$ is nondecreasing as a function of $t$. As for the relationship between $x(\cdot)$ and $v(\cdot)$, we have
\begin{equation*}
v(t) = x(t) + \lambda(t) F(x(t)) = x(t) + \tfrac{F(x(t))}{\|F(x(t))\|^{1-1/p}}, 
\end{equation*}
which implies that $\|x(t) - v(t)\| = \|F(x(t))\|^{1/p}$ and further we have
\begin{equation}\label{GEU:inequality-second}
F(x(t)) + \|x(t) - v(t)\|^{p-1}(x(t) - v(t)) = 0, \quad \textnormal{for all } t \geq 0. 
\end{equation}
We claim that $x(\cdot)$ is uniquely determined by $v(\cdot)$ and that $x(\cdot)$ is continuous. Indeed, let $x_1(t)$ and $x_2(t)$ both satisfy Eq.~\eqref{GEU:inequality-second} and let $x_1(t_0) \neq x_2(t_0)$ for some $t_0 > 0$.  For simplicity, we assume that $x_1 = x_1(t_0)$, $x_2 = x_2(t_0)$, $v = v(t_0)$ and $\tilde{F}(\cdot) = F(\cdot) + \|\cdot - v\|^{p-1}(\cdot-v)$. Then, we have
\begin{equation*}
\tilde{F}(x_1) = \tilde{F}(x_2) = 0. 
\end{equation*}
Define $f(\cdot) = \frac{1}{p+1}\|\cdot - v\|^{p+1}$, we have $\tilde{F}(\cdot) = F(\cdot) + \nabla f(\cdot)$. Since $f$ is strictly convex and $F$ is monotone, we have that $\tilde{F}$ is strictly monotone. Putting these pieces together yields that $x_1 = x_2$. By the definition of $x_1$ and $x_2$, this contradicts $x_1(t_0) \neq x_2(t_0)$. As such, $x(\cdot)$ is uniquely determined by $v(\cdot)$. 

We next prove the continuity of $x(\cdot)$ by contradiction. Assume that there exists $t_0 > 0$ such that $x(t_n) \rightarrow \tilde{x} \neq x(t_0)$ for some sequence $t_n \rightarrow t_0$.  Note that Eq.~\eqref{GEU:inequality-second} implies that 
\begin{equation*}
F(x(t_n)) + \|x(t_n) - v(t_n)\|^{p-1}(x(t_n) - v(t_n)) = 0. 
\end{equation*}
Taking the limit of this equation with respect to the sequence $t_n$ and using the continuity of $F$ and $v(\cdot)$, we have
\begin{equation*}
F(\tilde{x}) + \|\tilde{x} - v(t_0)\|^{p-1}(\tilde{x} - v(t_0)) = 0. 
\end{equation*}
In addition, we have 
\begin{equation*}
F(x(t_0)) + \|x(t_0) - v(t_0)\|^{p-1}(x(t_0) - v(t_0)) = 0. 
\end{equation*}
Combining these two equations with the previous argument implies that $\tilde{x} = x(t_0)$ which is a contradiction. Thus, $x(\cdot)$ is continuous. 

Finally, we have 
\begin{equation}\label{GEU:inequality-third}
v(t) - (I + \lambda(t)F)^{-1}v(t) = \lambda(t) F(x(t)) = \tfrac{F(x(t))}{\|F(x(t))\|^{1-1/p}}. 
\end{equation}
Combining Eq.~\eqref{GEU:inequality-first} and Eq.~\eqref{GEU:inequality-third} implies that 
\begin{equation*}
\|F(x(t))\|^{1/p} \geq \|F(x_0)\|^{1/p}e^{-\frac{t}{p-1}}, \quad \textnormal{for all } t \geq 0. 
\end{equation*}
which is equivalent to 
\begin{equation*}
\|F(x(t))\| \geq \|F(x_0)\| e^{-\frac{pt}{p-1}}, \quad \textnormal{for all } t \geq 0. 
\end{equation*}
This completes the proof. 
\end{proof}
\begin{remark}
Theorem~\ref{Thm:Global-Existence-Uniquess} shows that $F(x(t)) \neq 0$ for all $t \geq 0$ which is equivalent to the assertion that the orbit $x(\cdot)$ stays in $\Omega = \{x \in \HCal \mid F(x) \neq 0\}$. In other words, if $x(0) = x_0 \in \Omega$ and $s(0) = 0$ hold, the accelerated rescaled gradient systems in Eq.~\eqref{sys:main} can not be stabilized in finite time, which implies the asymptotic convergence behavior of corresponding discrete-time methods to an optimal solution (see Sections~\ref{sec:algorithm} and~\ref{sec:discrete} for details). In contrast, other existing rescaled gradient systems associated with convex optimization methods~\citep{Cortes-2006-Finite, Romero-2020-Finite} can exhibit finite-time convergence to an optimal solution when the function satisfies the generalized Polyak-$\L$ojasiewicz (PL) inequality. 
\end{remark}
We present our results on the asymptotic weak convergence of solution trajectories and further establish asymptotic strong convergence results under additional conditions.
\begin{theorem}\label{Thm:Rate-Asymptotic}
Suppose that $(x, v, s): [0, +\infty) \mapsto \HCal \times \HCal \times \HCal$ is a global solution of accelerated rescaled gradient system with a fixed value of $p \geq 1$. Then, there exists points $\bar{x} \in \{x \in \HCal \mid F(x) = 0\}$ such that the solution trajectories $x(t)$ and $v(t)$ weakly converge to $\bar{x}$ as $t \rightarrow +\infty$. In addition, strong convergence results can be established if either of the following conditions holds true:
\begin{enumerate}
\item $F = \nabla f$ where $f: \HCal \mapsto \br \cup \{+\infty\}$ is convex, differentiable and inf-compact.
\item $\{x \in \HCal \mid F(x) = 0\}$ has a nonempty interior. 
\end{enumerate} 
\end{theorem}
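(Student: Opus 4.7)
The plan is to reduce everything to the closed-loop reformulation in Eq.~\eqref{Prelim:Equal-second} and then invoke, in the spirit of the remark following that reformulation, Opial-style arguments of the sort developed in~\citet{Lin-2021-Monotone} for closed-loop control systems. Fix any $\bar{x}$ with $F(\bar{x}) = \textbf{0}$. Using $\dot{v}(t) = x(t) - v(t) = -\lambda(t) F(x(t))$, I would first compute
\begin{equation*}
\tfrac{d}{dt}\tfrac{1}{2}\|v(t) - \bar{x}\|^2 = -\lambda(t)\langle F(x(t)), v(t) - \bar{x}\rangle = -\lambda^2(t)\|F(x(t))\|^2 - \lambda(t)\langle F(x(t)) - F(\bar{x}), x(t) - \bar{x}\rangle,
\end{equation*}
where I decomposed $v(t) - \bar{x} = (v(t) - x(t)) + (x(t) - \bar{x})$ and used $v(t) - x(t) = \lambda(t) F(x(t))$. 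Monotonicity of $F$ makes both terms on the right nonpositive, so $\|v(t) - \bar{x}\|$ is nonincreasing (the first Opial ingredient) and, after integration,
\begin{equation*}
\int_0^{+\infty}\|\dot{v}(t)\|^2\,dt = \int_0^{+\infty}\lambda^2(t)\|F(x(t))\|^2\,dt < +\infty.
\end{equation*}
Combined with the constraint $\lambda(t)\|\dot{v}(t)\|^{p-1} = 1$ this yields $\int_0^{+\infty}\|F(x(t))\|^{2/p}\,dt < +\infty$, hence $\liminf_{t\to+\infty}\|F(x(t))\| = 0$; since $\|F(x(\cdot))\|^{1/p-1}$ is nondecreasing by Theorem~\ref{Thm:Global-Existence-Uniquess}, in fact $\|F(x(t))\| \to 0$.

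For weak convergence I would then carry out the standard Opial argument on $v(\cdot)$: pick any weak subsequential limit $v(t_n) \rightharpoonup \bar{v}$; since $\|x(t) - v(t)\| = \|F(x(t))\|^{1/p} \to 0$, also $x(t_n) \rightharpoonup \bar{v}$. Because $F$ is monotone and continuous, its graph is sequentially closed in the weak-strong topology, and $F(x(t_n)) \to \textbf{0}$ forces $F(\bar{v}) = \textbf{0}$. Together with the monotone decrease of $\|v(\cdot) - \bar{v}\|$, Opial's lemma gives a unique weak limit $\bar{x}$ for $v(t)$, and the identity $\|x(t) - v(t)\| \to 0$ transports the weak convergence to $x(t)$ as well.

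For strong convergence under condition~(2), I would use a Bruck-type argument: if $\bar{x} + \rho \mathbb{B} \subseteq \{F = \textbf{0}\}$, then monotonicity gives $\langle F(x(t)), x(t) - \bar{x} - \rho u\rangle \geq 0$ for every unit $u$, hence $\rho\|F(x(t))\| \leq \langle F(x(t)), x(t) - \bar{x}\rangle$. Plugging this into the integral identity for $\|v(t) - \bar{x}\|^2$ upgrades the $L^2$ integrability of $\|\dot{v}(t)\|$ to $L^1$ integrability, so $v(t)$ is Cauchy in norm and converges strongly; strong convergence of $x(t)$ follows from $x(t) - v(t) \to 0$. Under condition~(1), I would instead use the variational structure: the Lyapunov estimate above combined with convexity of $f$ yields $f(x(t)) \to \inf f$, so by inf-compactness $\{x(t)\}$ is relatively strongly compact, and its (unique) weak limit is then also its strong limit.

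The main obstacle I anticipate is the promotion from weak subsequential convergence to weak convergence of the entire trajectory together with the identification of the limit as a zero of $F$; this is what makes the preliminary $L^2$ estimate essential and is the step where the closed-loop reformulation in Eq.~\eqref{Prelim:Equal-second} (and the monotonicity of $\lambda(\cdot)$ it delivers) is really being used. Once this is in place, conditions~(1) and~(2) are classical strengthenings and the strong convergence arguments are essentially bookkeeping around the integrability improvement.
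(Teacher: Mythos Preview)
Your proposal is correct and takes a genuinely different route from the paper. The paper's own proof is a two-line reduction: it invokes \citet[Theorems~3.1 and~3.6]{Lin-2021-Monotone} on the closed-loop reformulation in Eq.~\eqref{Prelim:Equal-second} to obtain weak (and, under the extra hypotheses, strong) convergence of $v(t)$ together with $\|\dot{v}(t)\|\to 0$, and then transports both conclusions to $x(t)$ via $\|x(t)-v(t)\|=\|\dot{v}(t)\|$. What you do is essentially unpack those cited theorems by hand: the decreasing energy $\tfrac12\|v(t)-\bar{x}\|^2$, the $L^2$-integrability of $\dot{v}$, demiclosedness of the (maximal) monotone $F$, and Opial's lemma are precisely the ingredients sitting inside~\citet{Lin-2021-Monotone}, and your Bruck-type upgrade to $L^1$ for condition~(2) and the inf-compactness argument for condition~(1) are the standard routes there as well. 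Your version is self-contained and makes visible exactly where monotonicity enters; the paper's version is shorter but opaque.

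One small caveat: your step from $\liminf_{t\to\infty}\|F(x(t))\|=0$ to $\lim_{t\to\infty}\|F(x(t))\|=0$ appeals to the nondecrease of $\|F(x(t))\|^{1/p-1}$ from Theorem~\ref{Thm:Global-Existence-Uniquess}, but for $p=1$ that quantity is identically $1$ and carries no information. You therefore need a separate monotonicity argument for $\|v(t)-x(t)\|$ when $p=1$; the paper supplies exactly this later, in the proof of Theorem~\ref{Thm:Rate-Nonasymptotic}, by differentiating $g(t)=\tfrac12\|v(t)-x(t)\|^2$ and using monotonicity of $F$. With that patch your Opial argument goes through uniformly in $p\geq 1$.
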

\begin{proof}
By~\citet[Theorem~3.1 and 3.6]{Lin-2023-Monotone},  the unique global solution $(v, \lambda)$ of the closed-loop control system in Eq.~\eqref{Prelim:Equal-second} satisfies the condition that there exists $\bar{x} \in \{x \in \HCal \mid F(x) = 0\}$ such that the trajectory $v(t)$ weakly converges to $\bar{x}$ and $\|\dot{v}(t)\| \rightarrow 0$ as $t \rightarrow +\infty$. Since Eq.~\eqref{sys:main} can be reformulated as Eq.~\eqref{Prelim:Equal-second} with the same $v(t)$, there exist points $\bar{x} \in \{x \in \HCal \mid F(x) = 0\}$ such that the solution trajectory $v(t)$ weakly converges to $\bar{x}$ as $t \rightarrow +\infty$ and the strong convergence results hold under the additional conditions stated in the theorem. Further, we see from Eq.~\eqref{Prelim:Equal-second} that $\|x(t) - v(t)\| = \|\dot{v}(t)\| \rightarrow 0$. This implies that the solution trajectory $x(t)$ weakly converges to the same $\bar{x}$ as $t \rightarrow +\infty$ with strong convergence under the same additional conditions. 
\end{proof}
\begin{remark}
In an infinite-dimensional setting,  the weak convergence of $x(\cdot)$ and $v(\cdot)$ to some points $\bar{x} \in \{x \in \HCal \mid F(x) = 0\}$ in Theorem~\ref{Thm:Rate-Asymptotic} is the best possible result we can expect without any additional conditions. However, strong convergence results are more desirable since it guarantees that the distance between the solution trajectories and $\bar{x}$ converges to 0 in terms of a norm~\citep{Bauschke-2001-Weak}. Indeed,~\citet{Guler-1991-Convergence} provided an example showing the importance of strong convergence where the sequence of function values converges faster if the generated iterates achieves strong convergence. In addition, the conditions imposed by Theorem~\ref{Thm:Rate-Asymptotic} are not restrictive and are verifiable in practice. 
\end{remark}
Before stating the main theorems on the global convergence rate, we define \textit{the residue function} for the ME problems. Recalling that $x_0 \in \Omega = \{x \in \HCal \mid F(x) \neq 0\}$ is an initial point, we define the residue function directly from the ME problem as follows:
\begin{equation}\label{def:residue-function}
\textsc{res}(x) = \|F(x)\|. 
\end{equation}
In the context of ME problems, it is arguable that this residue function serves as a more suitable optimality criterion than the celebrated merit function which was introduced by~\citet{Nesterov-2007-Dual}, generalizing the function gap in convex optimization problems~\citep{Facchinei-2007-Finite}. 

We are now ready to present our main results on the global convergence rate estimation in terms of the residue function in Eq.~\eqref{def:residue-function}. 
\begin{theorem}\label{Thm:Rate-Nonasymptotic}
Suppose that $(x, v, s): [0, +\infty) \mapsto \HCal \times \HCal \times \HCal$ is a global solution of the accelerated rescaled gradient system with a fixed $p \geq 1$. Then, we have
\begin{equation*}
\textsc{res}(x(t)) = O(t^{-\frac{p}{2}}). 
\end{equation*}
\end{theorem}
\begin{remark}
Theorem~\ref{Thm:Rate-Nonasymptotic} is a continuous-time version of some existing results for first-order methods. Indeed, the last-iterate convergence rate of $O(k^{-1/2})$ is achieved by extragradient method and optimistic gradient method~\citep{Gorbunov-2022-Extragradient, Cai-2022-Finite}. Although the advantage of averaging has been well recognized in the literature~\citep{Bruck-1977-Weak, Lions-1978-Methode, Nemirovski-1978-Cesari, Nemirovski-1981-Effective} and the averaged iterates can gain better rate of $O(k^{-1})$~\citep{Ouyang-2021-Lower}, such last-iterate global convergence rate results have received much attention in the recent years~\citep{Mertikopoulos-2019-Learning, Lin-2020-Finite, Golowich-2020-Tight, Golowich-2020-Last, Ba-2024-Doubly}. 
\end{remark}
We define a Lyapunov function for the system in Eq.~\eqref{sys:main} as follows: 
\begin{equation}\label{def:Lyapunov}
\ECal(t) = \|s(t)\|^2.
\end{equation}
Note that the function in Eq.~\eqref{def:Lyapunov} is the squared norm of $s(t)$ which can be seen as a continuous-time version of a dual variable in the dual extrapolation methods. This function is simpler than that used for analyzing the convergence of Newton-like inertial systems~\citep{Attouch-2011-Continuous, Attouch-2013-Global, Abbas-2014-Newton, Attouch-2020-Newton,Attouch-2021-Continuous} and closed-loop control systems~\citep{Attouch-2016-Dynamic, Lin-2023-Monotone}. We also note that our discrete-time analysis for high-order methods largely depends on the discrete-time version of the Lyapunov function in Eq.~\eqref{def:Lyapunov} with a small modification to handle the constraint sets (see Section~\ref{sec:discrete}). 
\paragraph{Proof of Theorem~\ref{Thm:Rate-Nonasymptotic}:} Using the definition of $\ECal(\cdot)$ in Eq.~\eqref{def:Lyapunov}, we have
\begin{equation}\label{inequality:nonasymptotic-first}
\frac{d\ECal(t)}{dt} = 2\langle \dot{s}(t), s(t)\rangle = -\frac{2\langle F(x(t)), v(t) - x_0\rangle}{\|F(x(t))\|^{1 - 1/p}}. 
\end{equation}
Expanding this equality with any $z \in \HCal$, we have
\begin{equation*}
\frac{d\ECal(t)}{dt} = \frac{2(\langle F(x(t)), x_0 - z\rangle + \langle F(x(t)), z - x(t)\rangle + \langle F(x(t)), x(t) - v(t)\rangle)}{\|F(x(t))\|^{1 - 1/p}}.
\end{equation*}
Since $\dot{s}(t) = - \tfrac{F(x(t))}{\|F(x(t))\|^{1-1/p}}$, we have
\begin{equation*}
\frac{\langle F(x(t)), x_0 - z\rangle}{\|F(x(t))\|^{1 - 1/p}} = \langle \dot{s}(t), x_0 - z\rangle. 
\end{equation*}
Since $F$ is monotone, we have
\begin{equation*}
\langle F(x(t)), z - x(t)\rangle \leq \langle F(z), z - x(t)\rangle. 
\end{equation*}
Since $x(t) - v(t) + \tfrac{F(x(t))}{\|F(x(t))\|^{1-1/p}} = 0$, we have
\begin{equation*}
\frac{\langle F(x(t)), x(t) - v(t)\rangle}{\|F(x(t))\|^{1 - 1/p}} = -\|F(x(t))\|^{\frac{2}{p}}. 
\end{equation*}
Putting these pieces together in Eq.~\eqref{inequality:nonasymptotic-first} yields that, for any $z \in \HCal$, we have
\begin{equation}\label{inequality:nonasymptotic-second}
\frac{d\ECal(t)}{dt} \leq 2\langle \dot{s}(t), x_0 - z\rangle - 2\|F(x(t))\|^{\frac{1-p}{p}}\langle F(z), x(t) - z\rangle - 2\|F(x(t))\|^{\frac{2}{p}}. 
\end{equation}
Letting $z = \bar{x}$ be a solution satisfying $F(\bar{x}) = 0$ and $\|x_0 - \bar{x}\| \leq D$ in Eq.~\eqref{inequality:nonasymptotic-second}, we have
\begin{equation*}
\frac{d\ECal(t)}{dt} \leq 2\langle \dot{s}(t), x_0 - \bar{x}\rangle - 2\|F(x(t))\|^{\frac{2}{p}}. 
\end{equation*}
Integrating this inequality over $[0, t]$ and using $\ECal(0) = 0$, we have
\begin{equation*}
\int_0^t \|F(x(s))\|^{\frac{2}{p}} \; ds \leq \frac{1}{2}\left(\langle s(t), x_0 - z\rangle - \|s(t)\|^2\right) \leq \frac{D^2}{8}, \quad \textnormal{for all } t \geq 0.  
\end{equation*}
We claim that $t \mapsto \|F(x(t))\|$ is nonincreasing. Indeed, for the case of $p=1$, we have 
\begin{equation*}
\dot{s}(t) = -F(x(t)), \quad v(t) = x_0 + s(t), \quad x(t) - v(t) + F(x(t)) = 0.
\end{equation*}
In this case, we can write $x(t) = (I + F)^{-1}v(t)$. Since $v(\cdot)$ is continuously differentiable, we have $x(\cdot)$ is also continuously differentiable. Define the function $g(t) = \frac{1}{2}\|v(t) - x(t)\|^2$. Then, we have 
\begin{equation*}
\dot{g}(t) = \langle \dot{v}(t) - \dot{x}(t), v(t) - x(t) \rangle = -\langle \dot{v}(t) - \dot{x}(t), \dot{v}(t) \rangle = -\|\dot{v}(t) - \dot{x}(t)\|^2 - \langle \dot{v}(t) - \dot{x}(t), \dot{x}(t) \rangle. 
\end{equation*}
Since $\dot{v}(t) - \dot{x}(t) = F(\dot{x}(t))$ and $F$ is monotone, we have $\langle \dot{v}(t) - \dot{x}(t), \dot{x}(t) \rangle \geq 0$. This implies that $\dot{g}(t) \leq 0$ and hence $g(t)$ is nonincreasing. For the case of $p \geq 2$, Theorem~\ref{Thm:Global-Existence-Uniquess} implies that $\|F(x(t))\|^{1/p-1}$ is nondecreasing. Since $1/p-1 < 0$, we have $t \mapsto \|F(x(t))\|$ is nonincreasing as desired. Putting these pieces together yields that 
\begin{equation*}
\textsc{res}(x(t)) = \|F(x(t))\| \leq \left(\frac{D^2}{8t}\right)^{\frac{p}{2}} = O(t^{-\frac{p}{2}}). 
\end{equation*}
This completes the proof. 

\subsection{Discussion}
In continuous-time optimization, a dynamical system is designed to be computable under the oracles of a convex and differentiable function $f: \HCal \mapsto \br$ such that the solution trajectory $x(t)$ converges to a minimizer of $f$. A classical example is the gradient flow in the form of $\dot{x}(t) + \nabla f(x(t)) = 0$ which has been long studied due to its ability to yield that $x(t)$ converges to a minimizer of $f$ as $t \rightarrow +\infty$~\citep{Hadamard-1908-Memoire, Courant-1943-Variational, Ambrosio-2005-Gradient}. The subsequent work of~\citet{Schropp-1995-Using} and~\citet{Schropp-2000-Dynamical} explored links between nonlinear dynamical systems and gradient-based optimization, including nonlinear constraints.

In his seminal work,~\citet{Cortes-2006-Finite} proposed two \textit{discontinuous} normalized modifications of gradient flows to attain \textit{finite-time convergence}.  Specifically, his dynamical systems are 
\begin{equation}\label{sys:NGF-first}
\dot{x}(t) + \frac{\nabla f(x(t))}{\|\nabla f(x(t))\|} = 0, 
\end{equation}
and 
\begin{equation}\label{sys:NGF-second}
\dot{x}(t) + \sign(\nabla f(x(t))) = 0. 
\end{equation}
If $f$ is twice continuously differentiable and strongly convex in an open neighborhood $D \subseteq \br^n$ of $x^\star$, he proved that the maximal solutions to the dynamical systems in Eq.~\eqref{sys:NGF-first} and~\eqref{sys:NGF-second} (in the sense of Filippov~\citep{Filippov-1964-Differential, Paden-1987-Calculus, Arscott-1988-Differential}) exist and converge in finite time to $x^\star$, given that $x_0$ is in some positively invariant compact subset $S \subseteq D$. The convergence times are upper bounded by
\begin{equation*}
t^\star \leq \left\{\begin{array}{lr}
\tfrac{\|\nabla f(x_0)\|}{\min_{x \in S} \lambda_{\min}[\nabla^2 f(x)]}, & \textnormal{ for Eq.~\eqref{sys:NGF-first}}, \\ 
\tfrac{\|\nabla f(x_0)\|_1}{\min_{x \in S} \lambda_{\min}[\nabla^2 f(x)]}, & \textnormal{ for Eq.~\eqref{sys:NGF-second}}. 
\end{array}\right. 
\end{equation*}
Recently,~\citet{Romero-2020-Finite} have extended the finite-time convergence results to the rescaled gradient flow proposed by~\citet{Wibisono-2016-Variational} as follows:  
\begin{equation}\label{sys:RGF-first}
\dot{x}(t) + \frac{\nabla f(x(t))}{\|\nabla f(x(t))\|^{1-1/p}} = 0, 
\end{equation}
and considered a new rescaled gradient flow as follows, 
\begin{equation}\label{sys:RGF-second}
\dot{x}(t) + \|\nabla f(x(t))\|^{1/p}\sign(\nabla f(x(t))) = 0. 
\end{equation}
In particular, if $f$ is continuously differentiable and $\mu$-gradient-dominated of order $q \in (1, p+1)$ near a strict local minimizer $x^\star$, the maximal solutions to the dynamical systems in Eq.~\eqref{sys:RGF-first} and~\eqref{sys:RGF-second} (in the sense of Filippov) exist and converge in finite time to $x^\star$,  given that $\|x_0 - x^\star\| > 0$ is sufficiently small. The upper bounds for convergence times are summarized in~\citet[Theorem~1]{Romero-2020-Finite}. 

However, the above works are restricted to studying the dynamical systems associated with convex optimization methods. It has been unknown whether or not these methodologies can be extended to monotone equation problems and further lead to accelerated first-order and high-order methods with global convergence rate guarantees.

\section{Discrete-Time Algorithms}\label{sec:algorithm}
We propose two algorithmic frameworks that arise via temporal discretization of the systems in Eq.~\eqref{sys:main}. Our approach highlights the importance of the dual extrapolation step~\citep{Nesterov-2007-Dual}, interpreting it as the discretization of an integral. The first framework together with an approximate tensor subroutine recovers the existing $p^\textnormal{th}$-order VI methods specialized to solve ME problems while the second framework yields a new suite of simple first-order methods for solving ME problems. 

\subsection{Temporal discretization} 
We begin by studying an algorithm which is derived by temporal discretization of the accelerated rescaled gradient systems in Eq.~\eqref{sys:main} in a Euclidean setting. In particular, the systems can be rewritten as follows, 
\begin{equation*}
\begin{array}{lll}
\dot{s}(t) = - \tfrac{F(x(t))}{\|F(x(t))\|^{1-1/p}}, & v(t) = x_0 + s(t), & x(t) - v(t) + \tfrac{F(x(t))}{\|F(x(t))\|^{1-1/p}} = 0. 
\end{array} 
\end{equation*}
By introducing $\lambda(t) = \|F(x(t))\|^{1/p-1}$, we obtain
\begin{equation*}
\left\{\begin{array}{ll}
& \dot{s}(t) + \lambda(t)F(x(t)) = 0, \\
& v(t) = x_0 + s(t), \\ 
& x(t) - v(t) + \tfrac{F(x(t))}{\|F(x(t))\|^{1-1/p}} = 0, \\
& (\lambda(t))^p\|F(x(t))\|^{p-1} = 1. \\ 
\end{array}\right.  
\end{equation*}
Since $\|x(t) - v(t)\| = \|F(x(t))\|^{1/p} = \|\lambda(t)F(x(t))\|$, we have
\begin{equation}\label{Prelim:Equal-third}
\left\{\begin{array}{ll}
& \dot{s}(t) + \lambda(t)F(x(t)) = 0, \\
& - s(t) + v(t) - x_0 = 0, \\ 
& F(x(t)) + \|x(t) - v(t)\|^{p-1}(x(t) - v(t)) = 0, \\
& \lambda(t)\|x(t) - v(t)\|^{p-1} = 1. 
\end{array}\right.  
\end{equation}
We define a sequence $\{(x_k, v_{k+1}, s_k, \lambda_{k+1})\}_{k \geq 0}$ as a discrete-time counterpart of the solution trajectory $\{(x(t), v(t), s(t), \lambda(t))\}_{t \geq 0}$. By appeal to a discretization of the systems in Eq.~\eqref{Prelim:Equal-third}, we have
\begin{equation}\label{sys:discrete}
\left\{\begin{array}{ll}
& s_{k+1} - s_k + \lambda_{k+1} F(x_{k+1}) = 0, \\ 
& - s_k + v_{k+1} - x_0 = 0, \\
& F(x_{k+1}) + \|x_{k+1} - v_{k+1}\|^{p-1}(x_{k+1} - v_{k+1}) = 0, \\
& \lambda_{k+1}\|x_{k+1} - v_{k+1}\|^{p-1} = 1, \\
& x_0 \in \{x \in \br^d \mid F(x) \neq 0\} \textnormal{ and } s_0 = 0. 
\end{array}\right.
\end{equation}
According to the argument from~\citet{Lin-2022-Control}, we relax the equation $\lambda_{k+1}\|x_{k+1} - v_{k+1}\|^{p-1} = 1$ using $\lambda_{k+1}\|x_{k+1} - v_{k+1}\|^{p-1} \geq \theta$, where $\theta > 0$ is a parameter that enhances the flexibility of our methods. It is worth mentioning that the aforementioned frameworks are intrinsically different from the large-step A-HPE framework of~\citet{Monteiro-2012-Iteration} and its high-order extension~\citep{Lin-2023-Monotone} in that computing $x_{k+1}$ in our new frameworks do not require the value of $\lambda_{k+1}$. Indeed, the large-step A-HPE framework and its variants need to compute a pair of $x_{k+1} \in \br^d$ and $\lambda_{k+1} > 0$ jointly since the computation of $x_{k+1}$ is based on $\lambda_{k+1}$ and the computation of $\lambda_{k+1}$ is based on $x_{k+1}$. This is the reason why the binary search is necessary in those frameworks. 

\subsection{Algorithmic schemes} 
By instantiating our new framework with an approximate tensor subroutine, we can recover the existing $p^{\textnormal{th}}$-order VI methods when specialized to solve ME problems~\citep{Lin-2022-Perseus}. Indeed, the idea behind our approach is that we consider the setting where $F$ is $p^{\textnormal{th}}$-order Lipschitz continuous and define the approximate tensor subroutine based on the Taylor expansion of $F$. 
\begin{table}[!t]
\begin{tabular}{cc}
\begin{minipage}{.48\textwidth}
\begin{algorithm}[H]\small
\begin{algorithmic}\caption{High-Order ME Methods}\label{Algorithm:HO}
\STATE \textbf{Input:} $p \geq 1$, $x_0 \in \br^d$, $L > 0$ and $T \geq 1$. 
\STATE \textbf{Initialization:} $s_0 = 0 \in \br^d$.
\FOR{$k = 0, 1, 2, \ldots, T$} 
\STATE If $x_k$ is a solution of Eq.~\eqref{prob:main}, then \textbf{Stop}. 
\STATE $v_{k+1} \leftarrow x_0 + s_k$. 
\STATE Find $x_{k+1}$ satisfying $F_{v_{k+1}}(x_{k+1}) = 0$. 
\STATE $\lambda_{k+1} \leftarrow \frac{\eta_1(p, L)}{\|x_{k+1} - v_{k+1}\|^{p-1}}$ where $\eta_1$ is a function. 
\STATE $s_{k+1} \leftarrow s_k - \lambda_{k+1} F(x_{k+1})$. 
\ENDFOR
\STATE \textbf{Output:} $\bar{x} = \argmin_{x \in \{x_0, \ldots, x_T\}} \|F(x)\|$. 
\end{algorithmic}
\end{algorithm} 
\end{minipage} &
\begin{minipage}{.48\textwidth}
\begin{algorithm}[H]\small
\begin{algorithmic}\caption{First-Order ME Methods}\label{Algorithm:FO}
\STATE \textbf{Input:} $p \geq 1$, $x_0 \in \br^d$, $L > 0$ and $T \geq 1$. 
\STATE \textbf{Initialization:} $s_0 = 0 \in \br^d$.
\FOR{$k = 0, 1, 2, \ldots, T$} 
\STATE If $x_k$ is a solution of Eq.~\eqref{prob:main}, then \textbf{Stop}. 
\STATE $v_{k+1} \leftarrow x_0 + s_k$. 
\STATE $x_{k+1} \leftarrow v_{k+1} - \gamma\|F(v_{k+1})\|^{1/p-1}F(v_{k+1})$. 
\STATE $\lambda_{k+1} \leftarrow \frac{\eta_2(p, L, \gamma)}{\|x_{k+1} - v_{k+1}\|^{p-1}}$ where $\eta_2$ is a function. 
\STATE $s_{k+1} \leftarrow s_k - \lambda_{k+1} F(x_{k+1})$. 
\ENDFOR
\STATE \textbf{Output:} $\bar{x} = \argmin_{x \in \{x_0, \ldots, x_T\}} \|F(x)\|$. 
\end{algorithmic}
\end{algorithm} 
\end{minipage}
\end{tabular}
\end{table}

We can see from our framework that the following step plays a pivotal role: 
\begin{equation*}
\textnormal{Find } x_{k+1} \in \br^d \textnormal{ such that } F(x_{k+1}) + \|x_{k+1} - v_{k+1}\|^{p-1}(x_{k+1} - v_{k+1}) = 0,
\end{equation*}
which amounts to solving a strictly monotone equation problem and which could be challenging from a computational point of view. If $F$ is $p^{\textnormal{th}}$-order Lipschitz continuous, we can approximate $F(x_{k+1})$ using the Taylor expansion of $F$ at a point $v \in \br^d$. That is, we define
\begin{equation}\label{def:approximation}
F_v(x) = F(v) + \langle DF(v), x-v\rangle + \ldots + \tfrac{1}{(p-1)!}D^{(p-1)} F(v)[x-v]^{p-1} + \tfrac{2L}{(p-1)!}\|x - v\|^{p-1}(x - v), 
\end{equation}
and consider the following subproblem given by
\begin{equation*}
\textnormal{Find } x_{k+1} \in \br^d \textnormal{ such that } F_{v_{k+1}}(x_{k+1}) = 0. 
\end{equation*}
The resulting $p^{\textnormal{th}}$-order methods recover the existing $p^{\textnormal{th}}$-order VI methods when specialized to solve ME problems~\citep{Lin-2022-Perseus} and we summarize the details in Algorithm~\ref{Algorithm:HO}. For simplicity, we assume an access to an oracle which returns an exact solution of each subproblem and do not take the inexact approaches into account. 

Following the idea of~\citet{Wilson-2019-Accelerating}, we also approximate $F(x_{k+1})$ using the first-order Taylor expansion of $F$ at a point $v \in \br^d$ and consider another subproblem given by
\begin{equation*}
\textnormal{Find } x_{k+1} \in \br^d \textnormal{ such that } F(v_{k+1}) + \tfrac{1}{\gamma^p}\|x_{k+1} - v_{k+1}\|^{p-1}(x_{k+1} - v_{k+1}) = 0. 
\end{equation*}
This yields a new suite of simple first-order methods where each subproblem admits a closed-form solution. We summarize the details in Algorithm~\ref{Algorithm:FO}. 

We also summarize the restarted versions of two above ME methods in Algorithm~\ref{Algorithm:Restart}, which is built upon the restart strategy. Suppose that $F$ is strongly monotone around a unique optimal solution and $p \geq 2$, we consider Algorithm~\ref{Algorithm:Restart} and prove that it achieves a local superlinear convergence. Concerning Algorithm~\ref{Algorithm:Restart}, we run one iteration of Algorithm~\ref{Algorithm:HO} and~\ref{Algorithm:FO} initialized with $x_k$ at each epoch; indeed, we simply restart Algorithm~\ref{Algorithm:HO} and~\ref{Algorithm:FO} properly. It is also promising to combine our methods with adaptive restart strategies~\citep{Donoghue-2015-Adaptive, Roulet-2017-Sharpness}, and/or prove a local convergence guarantee under weaker conditions than the strong monotonicity of $F$. 
\begin{algorithm}[!t]
\begin{algorithmic}\caption{The Restart Versions of Algorithms~\ref{Algorithm:HO} and~\ref{Algorithm:FO}}\label{Algorithm:Restart}
\STATE \textbf{Input:} $p \geq 1$, $x_0 \in \br^d$, $L > 0$ and $T \geq 1$. 
\FOR{$k = 0, 1, 2, \ldots, T$}
\STATE If $x_k$ is a solution of Eq.~\eqref{prob:main}, then \textbf{Stop}. 
\STATE $x_{k+1}$ is an output of Algorithms~\ref{Algorithm:HO} and~\ref{Algorithm:FO} with $(p, x_k, L, 1)$, i.e., one iteration but initialized with $x_k$. 
\ENDFOR
\end{algorithmic}
\end{algorithm}

\subsection{Main results}
We start with the presentation of our main results for Algorithm~\ref{Algorithm:HO} and~\ref{Algorithm:FO}. To facilitate the presentation, we rewrite the definition of a residue function as $\textsc{res}(x) = \|F(x)\|$ and also recall the definitions of $p^{\textnormal{th}}$-order Lipschitz continuous and $p^{\textnormal{th}}$-order strongly Lipschitz continuous operators as follows.
\begin{definition}
$F$ is $p^{\textnormal{th}}$-order Lipschitz continuous if $\|D^{(p-1)} F(x') - D^{(p-1)} F(x)\| \leq L\|x' - x\|$. 
\end{definition}
\begin{definition}
$F$ is $p^{\textnormal{th}}$-order strongly Lipschitz continuous if $\|D^{(p-1)} F(x') - D^{(p-1)} F(x)\| \leq L\|x' - x\|$ and $\|D^{(m)}F(x)[F(x)]^m\| \leq L\|F(x)\|^{m+1-m/p}$ for $1 \leq m \leq p-1$. 
\end{definition}
We summarize our results in the following theorems.
\begin{theorem}\label{Thm:Global-Rate-High}
Letting $F$ be monotone and $p^{\textnormal{th}}$-order Lipschitz continuous and we set $\frac{p!}{(12p-6)L} \leq \eta_1(p, L) \leq \frac{p!}{(4p+2)L}$ in Algorithm~\ref{Algorithm:HO}. Then, the iterates $\{x_k\}_{k \geq 0}$ generated by the algorithm satisfy 
\begin{equation*}
\inf_{0 \leq i \leq k} \textsc{res}(x_i) = O(k^{-\frac{p}{2}}). 
\end{equation*}
\end{theorem}
\begin{theorem}\label{Thm:Global-Rate-First}
Letting $F$ be monotone and $p^{\textnormal{th}}$-order strongly Lipschitz continuous and we set $\frac{\gamma^p}{6-6\gamma Lc_p} \leq \eta_2(p, L, \gamma) \leq \frac{\gamma^p}{2+2\gamma Lc_p}$ where $0 < \gamma < \min\{1, \frac{1}{2Lc_p}\}$ with $c_p = \sum_{m=1}^p \tfrac{1}{m!}$ in Algorithm~\ref{Algorithm:FO}. Then, the iterates $\{x_k\}_{k \geq 0}$ generated by the algorithm satisfy 
\begin{equation*}
\inf_{0 \leq i \leq k} \textsc{res}(x_i) = O(k^{-\frac{p}{2}}). 
\end{equation*}
\end{theorem}
The local convergence guarantee for Algorithm~\ref{Algorithm:Restart} when $p \geq 2$ was established in the following theorem. Our analysis is based on Theorem~\ref{Thm:Global-Rate-High} and~\ref{Thm:Global-Rate-First} and our results are expressed in terms of $\|x_k - x^\star\|$, where $x^\star \in \br^d$ is a unique solution of Eq.~\eqref{prob:main} in the sense that $F(x^\star) = 0$. 
\begin{theorem}\label{Thm:Local-Rate-Estimate}
Suppose that $F$ be $\mu$-strongly monotone. Then, the following statements hold true.
\begin{itemize}
\item Under the same assumption as in Theorem~\ref{Thm:Global-Rate-High}, the iterates $\{x_k\}_{k \geq 0}$ generated by Algorithm~\ref{Algorithm:Restart} with Algorithm~\ref{Algorithm:HO} as the subroutine satisfy
\begin{equation*}
\|x_{k+1} - x^\star\| \leq \left(\tfrac{4^p(2p+1)}{p!}\tfrac{L}{\mu}\right)\|x_k - x^\star\|^p, \quad \textnormal{for all } k \geq 0. 
\end{equation*}
As a consequence, the above iterates $\{x_k\}_{k \geq 0}$ converge with a rate at least of the order $p \geq 2$ if the following condition holds true:
\begin{equation*}
\|x_0 - x^\star\| \leq \tfrac{1}{2}\left(\tfrac{p!}{4^p(2p+1)}\tfrac{\mu}{L}\right)^{\frac{1}{p-1}}. 
\end{equation*}
\item Under the same assumption as in Theorem~\ref{Thm:Global-Rate-First}, the iterates $\{x_k\}_{k \geq 0}$ generated by Algorithm~\ref{Algorithm:Restart} with Algorithm~\ref{Algorithm:FO} as the subroutine satisfy
\begin{equation*}
\|x_{k+1} - x^\star\| \leq \left(\tfrac{2^{2p+1}}{\mu\gamma^p}\right)\|x_k - x^\star\|^p, \quad \textnormal{for all } k \geq 0. 
\end{equation*}
As a consequence, the above iterates $\{x_k\}_{k \geq 0}$ converge with a rate at least of the order $p \geq 2$ if the following condition holds true:
\begin{equation*}
\|x_0 - x^\star\| \leq \tfrac{1}{2}\left(\tfrac{\mu\gamma^p}{2^{2p+1}}\right)^{\frac{1}{p-1}}. 
\end{equation*}
\end{itemize}
\end{theorem}
\begin{remark}
Theorem~\ref{Thm:Global-Rate-High} improves the convergence rate of $O(k^{-(p+1)/2}\log(k))$ which was obtained for $p^{\textnormal{th}}$-order line-search-based methods~\citep{Lin-2023-Monotone} under the same assumptions. The local convergence guarantee in Theorem~\ref{Thm:Local-Rate-Estimate} has been derived by~\citet{Jiang-2022-Generalized} for generalized optimistic gradient methods but without counting the required number of binary searches at each iteration. 
\end{remark}
\begin{remark}
Theorem~\ref{Thm:Global-Rate-First} and~\ref{Thm:Local-Rate-Estimate} are new and can be understood as a generalization of~\citet{Wilson-2019-Accelerating} from unconstrained convex optimization problems to monotone equation problems. There are also examples of strongly Lipschitz continuous operators in machine learning and we refer to~\citet{Wilson-2019-Accelerating} for concrete examples and relevant discussion. 
\end{remark}
\begin{remark}
The anchored extragradient method from~\citet{Yoon-2021-Accelerated} was shown to achieve an optimal convergence rate of $O(1/k)$ in terms of $\|F(x)\|$ and thus outperforms Algorithm~\ref{Algorithm:HO} for the case of $p=1$. Can we interpret the anchoring scheme from a continuous-time viewpoint and develop a high-order anchored method with an improved convergence rate guarantee? We leave this for future work and believe this would be a worthwhile research problem that deserves a specialized treatment on its own.
\end{remark}

\section{Discrete-Time Convergence Analysis}\label{sec:discrete}
We conduct the discrete-time convergence analysis for Algorithm~\ref{Algorithm:HO} and~\ref{Algorithm:FO}. In terms of a residue function, we prove a global rate of $O(k^{-p/2})$ for Algorithm~\ref{Algorithm:HO} if $F$ is $p^\textnormal{th}$-order Lipschitz continuous and Algorithm~\ref{Algorithm:FO} if $F$ is $p^\textnormal{th}$-order strongly Lipschitz continuous. Our analysis is motivated by the continuous-time analysis from Section~\ref{sec:system} and thus differs from the previous analysis in~\citet{Lin-2022-Perseus}. 

\subsection{Proof of Theorem~\ref{Thm:Global-Rate-High}}
We construct a discrete-time Lyapunov function for Algorithm~\ref{Algorithm:HO} as follows: 
\begin{equation}\label{def:Lyapunov-discrete}
E_k = \tfrac{1}{2}\|s_k\|^2,  
\end{equation}
which will be used to prove several technical lemmas for proving Theorem~\ref{Thm:Global-Rate-High}. 
\begin{lemma}\label{Lemma:HO-descent}
For every integer $T \geq 1$, we have
\begin{equation*}
\sum_{k=1}^T \lambda_k \langle F(x_k), x_k - x\rangle \leq E_0 - E_T + \langle s_T, x - x_0\rangle -\tfrac{1}{10}\left(\sum_{k=1}^T \|x_k - v_k\|^2\right), \quad \textnormal{for all } x \in \br^d.  
\end{equation*}
\end{lemma}
\begin{proof}
We have 
\begin{equation*}
E_{k+1} - E_k = \langle s_{k+1} - s_k, s_{k+1}\rangle - \tfrac{1}{2}\|s_{k+1} - s_k\|^2.
\end{equation*}
Combining this equation with the definition of $v_{k+1}$, we have
\begin{equation*}
E_{k+1} - E_k = \lambda_{k+1}\langle F(x_{k+1}), x_0 - v_{k+2}\rangle - \tfrac{1}{2}\|v_{k+2} - v_{k+1}\|^2. 
\end{equation*}
Letting $x \in \br^d$, we have
\begin{equation*}
E_{k+1} - E_k \leq \lambda_{k+1} \langle F(x_{k+1}), x_0 - x\rangle + \lambda_{k+1} \langle F(x_{k+1}), x - x_{k+1}\rangle + \lambda_{k+1} \langle F(x_{k+1}), x_{k+1} - v_{k+2}\rangle - \tfrac{1}{2}\|v_{k+2} - v_{k+1}\|^2. 
\end{equation*}
Summing up this inequality over $k = 0, 1, \ldots, T-1$ and changing the counter $k+1$ to $k$ yields 
\begin{equation}\label{inequality:HO-descent-first}
\sum_{k=1}^T \lambda_k \langle F(x_k), x_k - x\rangle \leq E_0 - E_T + \underbrace{\sum_{k=1}^T \lambda_k \langle F(x_k), x_0 - x\rangle}_{\textbf{I}} + \underbrace{\sum_{k=1}^T \lambda_k \langle F(x_k), x_k - v_{k+1}\rangle - \tfrac{1}{2}\|v_k - v_{k+1}\|^2}_{\textbf{II}}. 
\end{equation}
Using the update formula $s_{k+1} = s_k - \lambda_{k+1} F(x_{k+1})$ and $s_0 = 0 \in \br^d$, we have
\begin{equation}\label{inequality:HO-descent-second}
\textbf{I} = \langle s_0 - s_T, x_0 - x\rangle = \langle s_T, x - x_0\rangle.  
\end{equation}
Recall that $F_{v_k}(x_k) = 0$ where $F_v(x): \br^d \rightarrow \br^d$ is defined with any fixed $v \in \XCal$ as follows, 
\begin{equation*}
F_v(x) = F(v) + \langle DF(v), x-v\rangle + \ldots + \tfrac{1}{(p-1)!}D^{(p-1)} F(v)[x-v]^{p-1} + \tfrac{2L}{(p-1)!}\|x - v\|^{p-1}(x - v). 
\end{equation*}
Since $F$ is $p^{\textnormal{th}}$-order Lipschitz continuous, we have 
\begin{equation}\label{inequality:HO-descent-third}
\|F(x_k) - F_{v_k}(x_k) + \tfrac{2L}{(p-1)!}\|x_k - v_k\|^{p-1}(x_k - v_k)\| \leq \tfrac{L}{p!}\|x_k - v_k\|^p. 
\end{equation}
We perform a decomposition of $\langle F(x_k), x_k - v_{k+1}\rangle$ and derive from Eq.~\eqref{inequality:HO-descent-third} that 
\begin{eqnarray*}
\lefteqn{\langle F(x_k), x_k - v_{k+1}\rangle} \\
& = & \langle F(x_k) - F_{v_k}(x_k) + \tfrac{2L}{(p-1)!}\|x_k - v_k\|^{p-1}(x_k - v_k), x_k - v_{k+1}\rangle - \tfrac{2L}{(p-1)!}\|x_k - v_k\|^{p-1} \langle x_k - v_k, x_k - v_{k+1}\rangle \\
& \leq & \|F(x_k) - F_{v_k}(x_k) + \tfrac{2L}{(p-1)!}\|x_k - v_k\|^{p-1}(x_k - v_k)\|\|x_k - v_{k+1}\| - \tfrac{2L}{(p-1)!}\|x_k - v_k\|^{p-1} \langle x_k - v_k, x_k - v_{k+1}\rangle \\
& \leq & \tfrac{L}{p!}\|x_k - v_k\|^p\|x_k - v_{k+1}\| - \tfrac{2L}{(p-1)!}\|x_k - v_k\|^{p-1} \langle x_k - v_k, x_k - v_{k+1}\rangle \quad \textnormal{(cf. Eq.~\eqref{inequality:HO-descent-third})} \\
& \leq & \tfrac{L}{p!}\|x_k - v_k\|^{p+1} + \tfrac{L}{p!}\|x_k - v_k\|^p\|v_k - v_{k+1}\| - \tfrac{2L}{(p-1)!}\|x_k - v_k\|^{p-1} \langle x_k - v_k, x_k - v_{k+1}\rangle. 
\end{eqnarray*}
Note that we have
\begin{equation*}
\langle x_k - v_k, x_k - v_{k+1}\rangle = \|x_k - v_k\|^2 + \langle x_k - v_k, v_k - v_{k+1}\rangle \geq \|x_k - v_k\|^2 - \|x_k - v_k\|\|v_k - v_{k+1}\|. 
\end{equation*}
Putting these pieces together yields that 
\begin{equation*}
\langle F(x_k), x_k - v_{k+1}\rangle \leq \tfrac{(2p+1)L}{p!}\|x_k - v_k\|^p\|v_k - v_{k+1}\| - \tfrac{(2p-1)L}{p!}\|x_k - v_k\|^{p+1}. 
\end{equation*}
Since $\lambda_k = \frac{\eta_1(p, L)}{\|x_k - v_k\|^{p-1}}$ and $\frac{p!}{(12p-6)L} \leq \eta_1(p, L) \leq \frac{p!}{(4p+2)L}$, we have $\frac{1}{12p-6} \leq \frac{\lambda_k L\|x_k - v_k\|^{p-1}}{p!} \leq \frac{1}{4p+2}$. Thus, we have
\begin{eqnarray}\label{inequality:HO-descent-fourth}
\textbf{II} & \leq & \sum_{k=1}^T \left(\tfrac{(2p+1)\lambda_k L}{p!}\|x_k - v_k\|^p\|\|v_k - v_{k+1}\| - \tfrac{1}{2}\|v_k - v_{k+1}\|^2 - \tfrac{(2p-1)\lambda_k L}{p!}\|x_k - v_k\|^{p+1}\right) \nonumber \\
& \leq & \sum_{k=1}^T \left(\tfrac{1}{2}\|x_k - v_k\|\|\|v_k - v_{k+1}\| - \tfrac{1}{2}\|v_k - v_{k+1}\|^2 - \tfrac{1}{6}\|x_k - v_k\|^2\right) \nonumber \\ 
& \leq & \sum_{k=1}^T \left(\max_{a \geq 0}\left\{\tfrac{1}{2}\|x_k - v_k\|a - \tfrac{1}{2}a^2\right\} - \tfrac{1}{6}\|x_k - v_k\|^2\right) \nonumber \\
& = & -  \tfrac{1}{24} \left(\sum_{k=1}^T \|x_k - v_k\|^2 \right). 
\end{eqnarray}
Plugging Eq.~\eqref{inequality:HO-descent-second} and Eq.~\eqref{inequality:HO-descent-fourth} into Eq.~\eqref{inequality:HO-descent-first} yields that 
\begin{equation*}
\sum_{k=1}^T \lambda_k \langle F(x_k), x_k - x\rangle \leq E_0 - E_T + \langle s_T, x - x_0\rangle -\tfrac{1}{24}\left(\sum_{k=1}^T \|x_k - v_k\|^2 \right). 
\end{equation*}
This completes the proof. 
\end{proof}
\begin{lemma}\label{Lemma:HO-error}
For every integer $T \geq 1$ and let $x \in \br^d$, we have
\begin{equation*}
\sum_{k=1}^T \|x_k - v_k\|^2 \leq 12\|x^\star - x_0\|^2,
\end{equation*}
where $x^\star \in \br^d$ is a solution of Eq.~\eqref{prob:main} satisfying that $F(x^\star) = 0$. 
\end{lemma}
\begin{proof}
For any $x \in \br^d$, we have
\begin{equation*}
E_0 - E_T + \langle s_T, x - x_0\rangle = E_0 - \tfrac{1}{2}\|s_T\|^2 + \langle s_T, x - x_0\rangle. 
\end{equation*}
Since $s_0 = 0$, we have $E_0 = 0$. By Young's inequality, we have
\begin{equation*}
E_0 - E_T + \langle s_T, x - x_0\rangle \leq - \tfrac{1}{2}\|s_T\|^2 + \tfrac{1}{2}\|s_T\|^2 + \tfrac{1}{2}\|x - x_0\|^2 = \tfrac{1}{2}\|x - x_0\|^2. 
\end{equation*}
This together with Lemma~\ref{Lemma:HO-descent} yields that 
\begin{equation*}
\sum_{k=1}^T \lambda_k \langle F(x_k), x_k - x\rangle + \tfrac{1}{24}\left(\sum_{k=1}^T \|x_k - v_k\|^2\right) \leq \tfrac{1}{2}\|x - x_0\|^2, \quad \textnormal{for all } x \in \br^d, 
\end{equation*}
which implies the first inequality. Letting $x = x^\star$ be a solution of Eq.~\eqref{prob:main} satisfying that $F(x^\star) = 0$ in the above inequality yields the desired inequality. 
\end{proof}
\paragraph{Proof of Theorem~\ref{Thm:Global-Rate-High}:} We let $x^\star \in \br^d$ be a solution of Eq.~\eqref{prob:main} defined in Lemma~\ref{Lemma:HO-error} and have
\begin{equation}\label{inequality:HO-main-first}
\inf_{1 \leq k \leq T} \|x_k - v_k\|^2 \leq \tfrac{1}{T}\sum_{k=1}^T \|x_k - v_k\|^2 \leq \tfrac{12\|x^\star - x_0\|^2}{T}.  
\end{equation}
Recall that Eq.~\eqref{inequality:HO-descent-third} in the proof of Lemma~\ref{Lemma:HO-descent} yields
\begin{equation*}
\|F(x_k) - F_{v_k}(x_k) + \tfrac{2L}{(p-1)!}\|x_k - v_k\|^{p-1}(x_k - v_k)\| \leq \tfrac{L}{p!}\|x_k - v_k\|^p. 
\end{equation*}
Since $F_{v_k}(x_k) = 0$, we have
\begin{equation*}
\|F(x_k)\| \leq \tfrac{L}{p!}\|x_k - v_k\|^p + \tfrac{2L}{(p-1)!}\|x_k - v_k\|^p \leq \tfrac{(2p+1)L}{p!}\|x_k - v_k\|^p.  
\end{equation*}
By the definition of $\textsc{res}(\cdot)$, we have
\begin{equation}\label{inequality:HO-main-second}
\inf_{1 \leq k \leq T} \textsc{res}(x_k) = \inf_{1 \leq k \leq T} \|F(x_k)\| \leq \tfrac{(2p+1)L}{p!}\left(\inf_{1 \leq k \leq T}\|x_k - v_k\|^p\right). 
\end{equation}
Plugging Eq.~\eqref{inequality:HO-main-first} into Eq.~\eqref{inequality:HO-main-second} yields the desired result. 

\subsection{Proof of Theorem~\ref{Thm:Global-Rate-First}}
We use the same discrete-time Lyapunov function in Eq.~\eqref{def:Lyapunov-discrete} to analyze the dynamics of Algorithm~\ref{Algorithm:FO}. 
\begin{lemma}\label{Lemma:FO-descent}
For every integer $T \geq 1$, we have
\begin{equation*}
\sum_{k=1}^T \lambda_k \langle F(x_k), x_k - x\rangle \leq E_0 - E_T + \langle s_T, x - x_0\rangle -\tfrac{1}{10}\left(\sum_{k=1}^T \|x_k - v_k\|^2\right), \quad \textnormal{for all } x \in \br^d.  
\end{equation*}
\end{lemma}
\begin{proof}
Using the same arguments as in Lemma~\ref{Lemma:HO-descent}, we have
\begin{equation}\label{inequality:FO-descent-first}
\sum_{k=1}^T \lambda_k \langle F(x_k), x_k - x\rangle \leq E_0 - E_T + \langle s_T, x - x_0\rangle + \underbrace{\sum_{k=1}^T \lambda_k \langle F(x_k), x_k - v_{k+1}\rangle - \tfrac{1}{2}\|v_k - v_{k+1}\|^2}_{\textbf{III}}. 
\end{equation}
Recall that $x_k = v_k - \gamma\|F(v_k)\|^{1/p-1}F(v_k)$, we have 
\begin{equation}\label{inequality:FO-descent-second}
x_k - v_k = - \gamma\|F(v_k)\|^{1/p-1}F(v_k).
\end{equation}
We perform a decomposition of $\|F(x_k) - F(v_k)\|$ and derive from Eq.~\eqref{inequality:FO-descent-second} that 
\begin{eqnarray*}
\lefteqn{\|F(x_k) - F(v_k)\| \leq \|F(x_k) - F(v_k) - \sum_{m=1}^{p-1} \tfrac{1}{m!}D^{(m)} F(v_k)[x_k-v_k]^m\| + \sum_{m=1}^{p-1} \tfrac{1}{m!}|D^{(m)} F(v_k)[x_k-v_k]^m|} \\
& = & \|F(x_k) - F(v_k) - \sum_{m=1}^{p-1} \tfrac{1}{m!}D^{(m)} F(v_k)[x_k-v_k]^m\| + \sum_{m=1}^{p-1} \tfrac{\gamma^m}{m!}\|F(v_k)\|^{\frac{m(1-p)}{p}}|D^{(m)} F(v_k)[F(v_k)]^m|.
\end{eqnarray*}
Since $F$ is $p^{\textnormal{th}}$-order strongly Lipschitz continuous, we have 
\begin{equation*}
\|F(x_k) - F(v_k) - \sum_{m=1}^{p-1} \tfrac{1}{m!}D^{(m)} F(v_k)[x_k-v_k]^m\| \leq \tfrac{L}{p!}\|x_k - v_k\|^p, 
\end{equation*}
and 
\begin{equation*}
\sum_{m=1}^{p-1} \tfrac{\gamma^m}{m!}\|F(v_k)\|^{\frac{m(1-p)}{p}}|D^{(m)} F(v_k)[F(v_k)]^m| \leq \left(\sum_{m=1}^{p-1} \tfrac{\gamma^m}{m!}\right)L\|F(v_k)\| = \left(\sum_{m=1}^{p-1} \tfrac{\gamma^{m-p}}{m!}\right)L\|x_k - v_k\|^p. 
\end{equation*}
Since $0 < \gamma < 1$, we have
\begin{equation}\label{inequality:FO-descent-third}
\|F(x_k) - F(v_k)\| \leq \left(\sum_{m=1}^p \tfrac{\gamma^{m-p}}{m!}\right)L\|x_k - v_k\|^p \leq \tfrac{1}{\gamma^{p-1}} \left(\sum_{m=1}^p \tfrac{1}{m!}\right)L\|x_k - v_k\|^p. 
\end{equation}
For simplicity, we let $c_p = \sum_{m=1}^p \tfrac{1}{m!}$. By using the argument as in Lemma~\ref{Lemma:HO-descent} with Eq.~\eqref{inequality:FO-descent-second} and~\eqref{inequality:FO-descent-third}, we have
\begin{eqnarray*}
\lefteqn{\langle F(x_k), x_k - v_{k+1}\rangle = \langle F(x_k) - F(v_k), x_k - v_{k+1}\rangle + \langle F(v_k), x_k - v_{k+1}\rangle} \\
& \leq & \langle F(x_k) - F(v_k), x_k - v_{k+1}\rangle - \tfrac{1}{\gamma^p}\|x_k - v_k\|^{p-1} \langle x_k - v_k, x_k - v_{k+1}\rangle \quad \textnormal{(cf. Eq.~\eqref{inequality:FO-descent-second})} \\
& \leq & \tfrac{L c_p}{\gamma^{p-1}}\|x_k - v_k\|^p \|x_k - v_{k+1}\| - \tfrac{1}{\gamma^p}\|x_k - v_k\|^{p-1} \langle x_k - v_k, x_k - v_{k+1}\rangle \quad \textnormal{(cf. Eq.~\eqref{inequality:FO-descent-third})} \\
& \leq & \tfrac{Lc_p}{\gamma^{p-1}}\|x_k - v_k\|^{p+1} + \tfrac{Lc_p}{\gamma^{p-1}}\|x_k - v_k\|^p\|v_k - v_{k+1}\| - \tfrac{1}{\gamma^p}\|x_k - v_k\|^{p-1} \langle x_k - v_k, x_k - v_{k+1}\rangle. 
\end{eqnarray*}
Since $\langle x_k - v_k, x_k - v_{k+1}\rangle \geq \|x_k - v_k\|^2 - \|x_k - v_k\|\|v_k - v_{k+1}\|$, we have
\begin{equation*}
\langle F(x_k), x_k - v_{k+1}\rangle \leq \tfrac{1+\gamma Lc_p}{\gamma^p}\|x_k - v_k\|^p\|v_k - v_{k+1}\| - \tfrac{1-\gamma Lc_p}{\gamma^p}\|x_k - v_k\|^{p+1}. 
\end{equation*}
Since $0 < \gamma < \frac{1}{2Lc_p}$, the interval $[\tfrac{\gamma^p}{6-6\gamma Lc_p}, \tfrac{\gamma^p}{2+2\gamma Lc_p}]$ is nonempty. Thus, it would be reasonable to define the function $\eta_2(p, L, \gamma)$ satisfying that 
\begin{equation*}
\tfrac{\gamma^p}{6-6\gamma Lc_p} \leq \eta_2(p, L, \gamma) \leq \tfrac{\gamma^p}{2+2\gamma Lc_p}. 
\end{equation*}
Since $\lambda_k = \frac{\eta_2(p, L, \gamma)}{\|x_k - v_k\|^{p-1}}$, we have $\frac{1}{6-6\gamma Lc_p} \leq \frac{\lambda_k\|x_k - v_k\|^{p-1}}{\gamma^p} \leq \frac{1}{2+2\gamma Lc_p}$. This yields
\begin{eqnarray}\label{inequality:FO-descent-fourth}
\textbf{III} & \leq & \sum_{k=1}^T \left(\tfrac{1+\gamma Lc_p}{\gamma^p}\lambda_k\|x_k - v_k\|^p\|v_k - v_{k+1}\| - \tfrac{1}{2}\|v_k - v_{k+1}\|^2 - \tfrac{1-\gamma Lc_p}{\gamma^p}\lambda_k\|x_k - v_k\|^{p+1}\right) \nonumber \\
& \leq & \sum_{k=1}^T \left(\tfrac{1}{2}\|x_k - v_k\|\|\|v_k - v_{k+1}\| - \tfrac{1}{2}\|v_k - v_{k+1}\|^2 - \tfrac{1}{6}\|x_k - v_k\|^2\right) \nonumber \\ 
& \leq & \sum_{k=1}^T \left(\max_{a \geq 0}\left\{\tfrac{1}{2}\|x_k - v_k\|a - \tfrac{1}{2}a^2\right\} - \tfrac{1}{6}\|x_k - v_k\|^2\right) \nonumber \\
& = & -  \tfrac{1}{24} \left(\sum_{k=1}^T \|x_k - v_k\|^2 \right). 
\end{eqnarray}
Plugging Eq.~\eqref{inequality:FO-descent-fourth} into Eq.~\eqref{inequality:FO-descent-first} yields that 
\begin{equation*}
\sum_{k=1}^T \lambda_k \langle F(x_k), x_k - x\rangle \leq E_0 - E_T + \langle s_T, x - x_0\rangle -\tfrac{1}{24}\left(\sum_{k=1}^T \|x_k - v_k\|^2 \right). 
\end{equation*}
This completes the proof. 
\end{proof}
\paragraph{Proof of Theorem~\ref{Thm:Global-Rate-First}:} Since Lemma~\ref{Lemma:FO-descent} guarantees that Lemma~\ref{Lemma:HO-error} still holds true for Algorithm~\ref{Algorithm:FO}, we have
\begin{equation}\label{inequality:FO-main-first}
\inf_{1 \leq k \leq T} \|x_k - v_k\|^2 \leq \tfrac{12\|x^\star - x_0\|^2}{T}.  
\end{equation}
Recall that Eq.~\eqref{inequality:FO-descent-third} in the proof of Lemma~\ref{Lemma:FO-descent} yields
\begin{equation*}
\|F(x_k) - F(v_k)\| \leq \tfrac{1}{\gamma^{p-1}} \left(\sum_{m=1}^p \tfrac{1}{m!}\right)L\|x_k - v_k\|^p. 
\end{equation*}
Since $0 < \gamma < \frac{1}{2Lc_p}$ where $c_p = \sum_{m=1}^p \tfrac{1}{m!}$, we have 
\begin{equation*}
\|F(x_k) - F(v_k)\| \leq \tfrac{1}{2\gamma^p}\|x_k - v_k\|^p. 
\end{equation*}
Recall that $x_k = v_k - \gamma\|F(v_k)\|^{1/p-1}F(v_k)$ yields $\|F(v_k)\| = \frac{1}{\gamma^p}\|x_k - v_k\|^p$, we have
\begin{equation*}
\|F(x_k)\| \leq \tfrac{1}{\gamma^p}\|x_k - v_k\|^p + \tfrac{1}{2\gamma^p}\|x_k - v_k\|^p \leq \tfrac{2}{\gamma^p}\|x_k - v_k\|^p.  
\end{equation*}
By the definition of $\textsc{res}(\cdot)$, we have
\begin{equation}\label{inequality:FO-main-second}
\inf_{1 \leq k \leq T} \textsc{res}(x_k) = \inf_{1 \leq k \leq T} \|F(x_k)\| \leq \tfrac{2}{\gamma^p}\left(\inf_{1 \leq k \leq T}\|x_k - v_k\|^p\right). 
\end{equation}
Plugging Eq.~\eqref{inequality:FO-main-first} into Eq.~\eqref{inequality:FO-main-second} yields the desired result. 

\subsection{Proof of Theorem~\ref{Thm:Local-Rate-Estimate}}
We derive from Eq.~\eqref{inequality:HO-main-first} and Eq.~\eqref{inequality:HO-main-second} in the proof of Theorem~\ref{Thm:Global-Rate-High} that one iteration of Algorithm~\ref{Algorithm:HO} with an input $x_0$ satisfies that 
\begin{equation*}
\|F(x_1)\| \leq \tfrac{(2p+1)L}{p!}(12\|x_0 - x^\star\|^2)^{\frac{p}{2}} \leq \tfrac{4^p(2p+1)L}{p!}\|x_0 - x^\star\|^p. 
\end{equation*}
By abuse of notation, we let the iterates $\{x_k\}_{k \geq 0}$ be generated by Algorithm~\ref{Algorithm:Restart} with Algorithm~\ref{Algorithm:HO} as the subroutine. Then, $x_{k+1}$ is an output of Algorithm~\ref{Algorithm:HO} with the input $(p, x_k, L, 1)$, i.e., one iteration of Algorithm~\ref{Algorithm:HO} with an input $x_k$.  This implies that 
\begin{equation*}
\|F(x_{k+1})\| \leq \tfrac{4^p(2p+1)L}{p!}\|x_k - x^\star\|^p. 
\end{equation*}
Since $F$ is $\mu$-strongly monotone, we have
\begin{equation*}
\|x_{k+1} - x^\star\|^2 \leq \tfrac{1}{\mu}\langle F(x_{k+1}), x_{k+1} - x^\star\rangle \leq \tfrac{1}{\mu}\|F(x_{k+1})\|\|x_{k+1} - x^\star\|. 
\end{equation*}
Putting these pieces together yields that 
\begin{equation*}
\|x_{k+1} - x^\star\| \leq \tfrac{1}{\mu}\|F(x_{k+1})\| \leq \tfrac{4^p(2p+1)}{p!}\tfrac{L}{\mu}\|x_k - x^\star\|^p, 
\end{equation*}
which implies that 
\begin{equation}
\|x_{k+1} - x^\star\| \leq \left(\tfrac{4^p(2p+1)}{p!}\tfrac{L}{\mu}\right)\|x_k - x^\star\|^p. 
\end{equation}
For the case of $p \geq 2$, we have $p-1 \geq 1$. If the following condition holds true: 
\begin{equation*}
\|x_0 - x^\star\| \leq \tfrac{1}{2}\left(\tfrac{p!}{4^p(2p+1)}\tfrac{\mu}{L}\right)^{\frac{1}{p-1}},  
\end{equation*}
then we have
\begin{eqnarray*}
\lefteqn{\left(\tfrac{4^p(2p+1)}{p!}\tfrac{L}{\mu}\right)^{\frac{1}{p-1}}\|x_{k+1} - x^\star\| \leq \left(\tfrac{4^p(2p+1)}{p!}\tfrac{L}{\mu}\right)^{\frac{p}{p-1}}\|x_k - x^\star\|^p} \\
& = & \left(\left(\tfrac{4^p(2p+1)}{p!}\tfrac{L}{\mu}\right)^{\frac{1}{p-1}}\|x_k - x^\star\|\right)^p \leq \left(\left(\tfrac{4^p(2p+1)}{p!}\tfrac{L}{\mu}\right)^{\frac{1}{p-1}}\|x_0 - x^\star\|\right)^{p^{k+1}} \leq (\tfrac{1}{2})^{p^{k+1}}. 
\end{eqnarray*}
For Algorithm~\ref{Algorithm:FO}, we have
\begin{equation*}
\|F(x_1)\| \leq \tfrac{2}{\gamma^p}(12\|x_0 - x^\star\|^2)^{\frac{p}{2}} \leq \tfrac{2^{2p+1}}{\gamma^p}\|x_0 - x^\star\|^p. 
\end{equation*}
By a similar argument, we have
\begin{equation*}
\|x_{k+1} - x^\star\| \leq \left(\tfrac{2^{2p+1}}{\mu\gamma^p}\right)\|x_k - x^\star\|^p. 
\end{equation*}
Thus, if the following condition holds true: 
\begin{equation*}
\|x_0 - x^\star\| \leq \tfrac{1}{2}\left(\tfrac{\mu\gamma^p}{2^{2p+1}}\right)^{\frac{1}{p-1}}, 
\end{equation*}
we have
\begin{equation*}
\left(\tfrac{2^{2p+1}}{\mu\gamma^p}\right)^{\frac{1}{p-1}}\|x_{k+1} - x^\star\| \leq (\tfrac{1}{2})^{p^{k+1}}. 
\end{equation*}
This completes the proof. 

\section{Experiments}\label{sec:exp}
We evaluate the performance of Algorithm~\ref{Algorithm:FO} for solving unconstrained min-max optimization problems with synthetic datasets. The baseline method is the first-order extragradient (EG) method.\footnote{We do not evaluate Algorithm~\ref{Algorithm:HO} and other related high-order methods since their effectiveness have been demonstrated by the experiments in prior works~\citep{Bullins-2022-Higher, Lin-2022-Explicit}. } Both our method and the EG method were implemented using MATLAB R2021b on a MacBook Pro with an Intel Core i9 2.4GHz and 16GB memory.
\begin{figure*}[!t]
\centering
\includegraphics[width=0.48\textwidth]{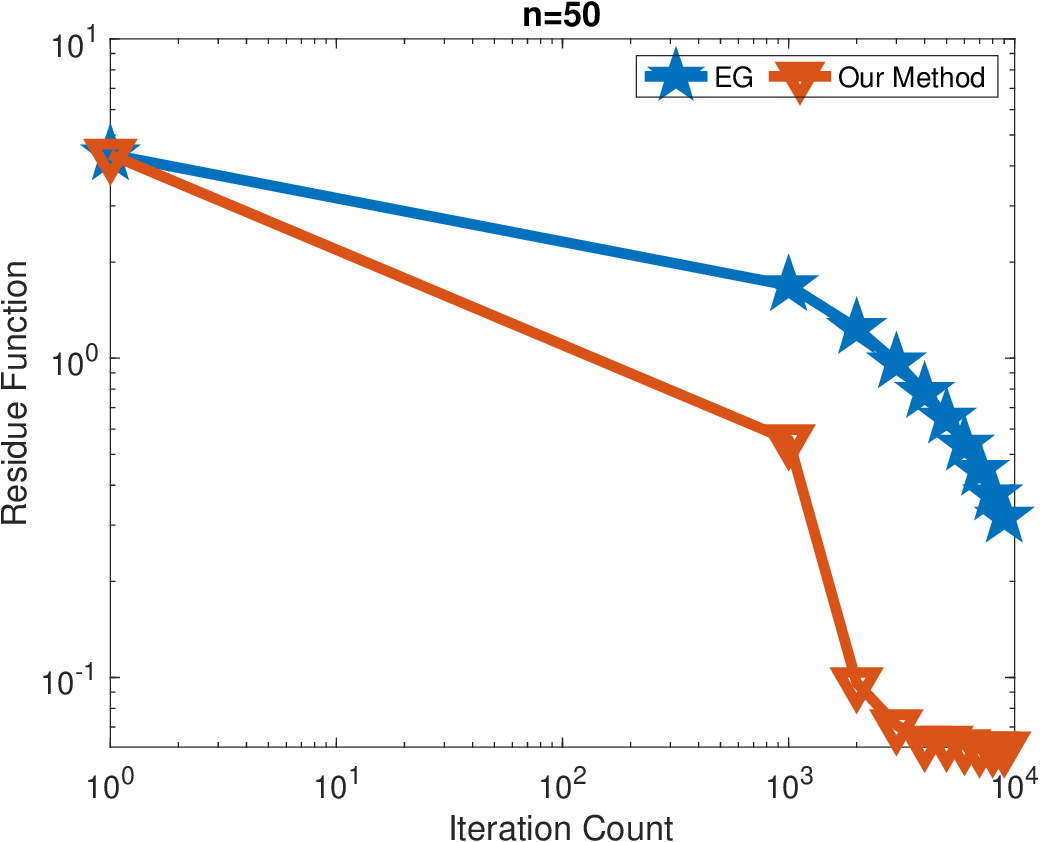}
\includegraphics[width=0.48\textwidth]{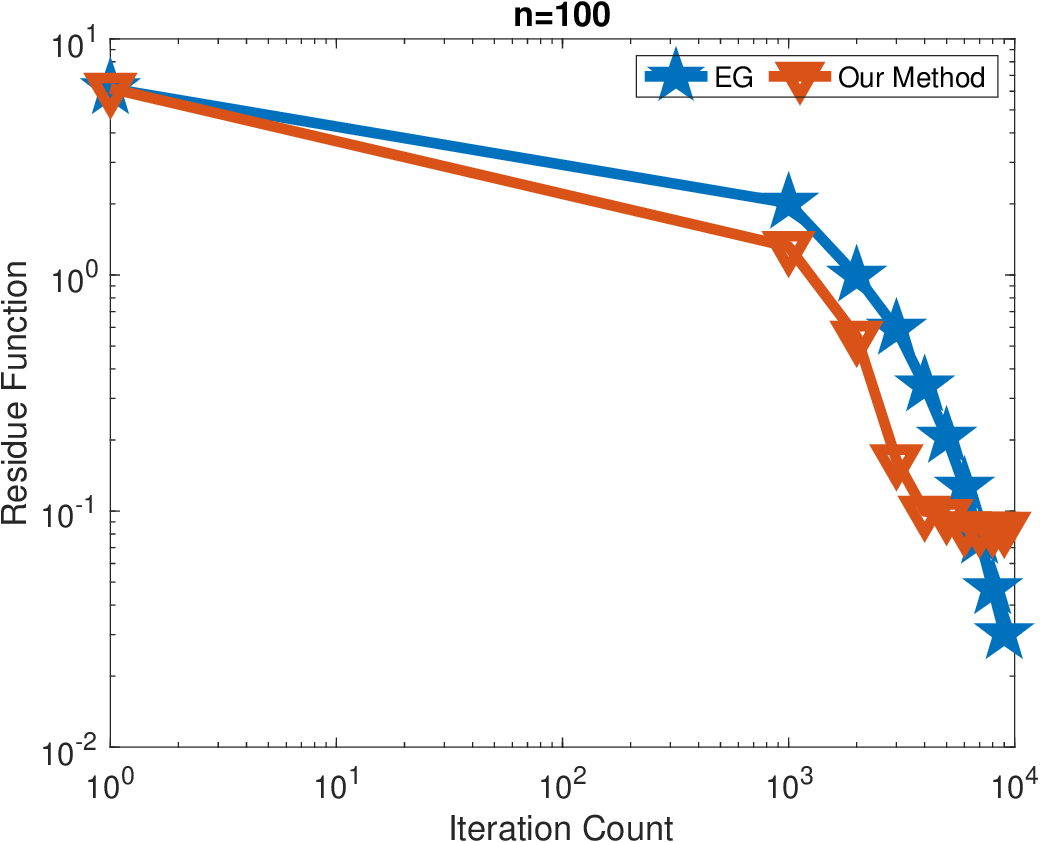} \\ \vspace{5pt}
\includegraphics[width=0.48\textwidth]{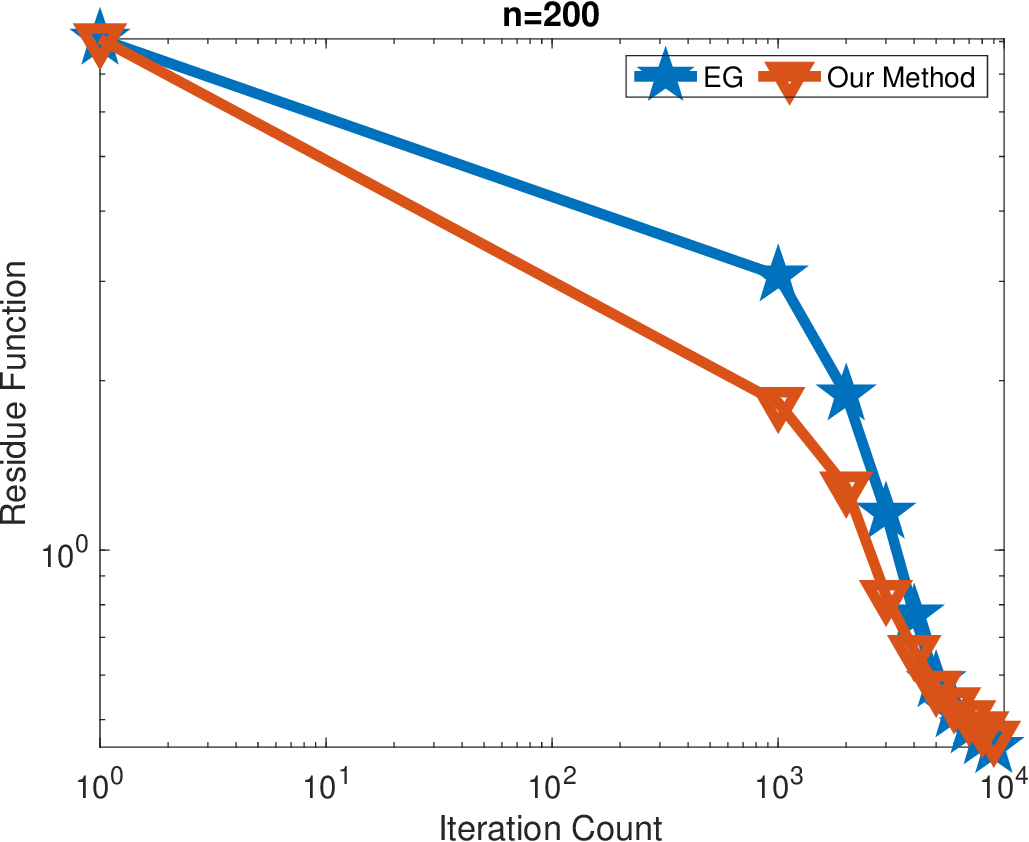}
\includegraphics[width=0.48\textwidth]{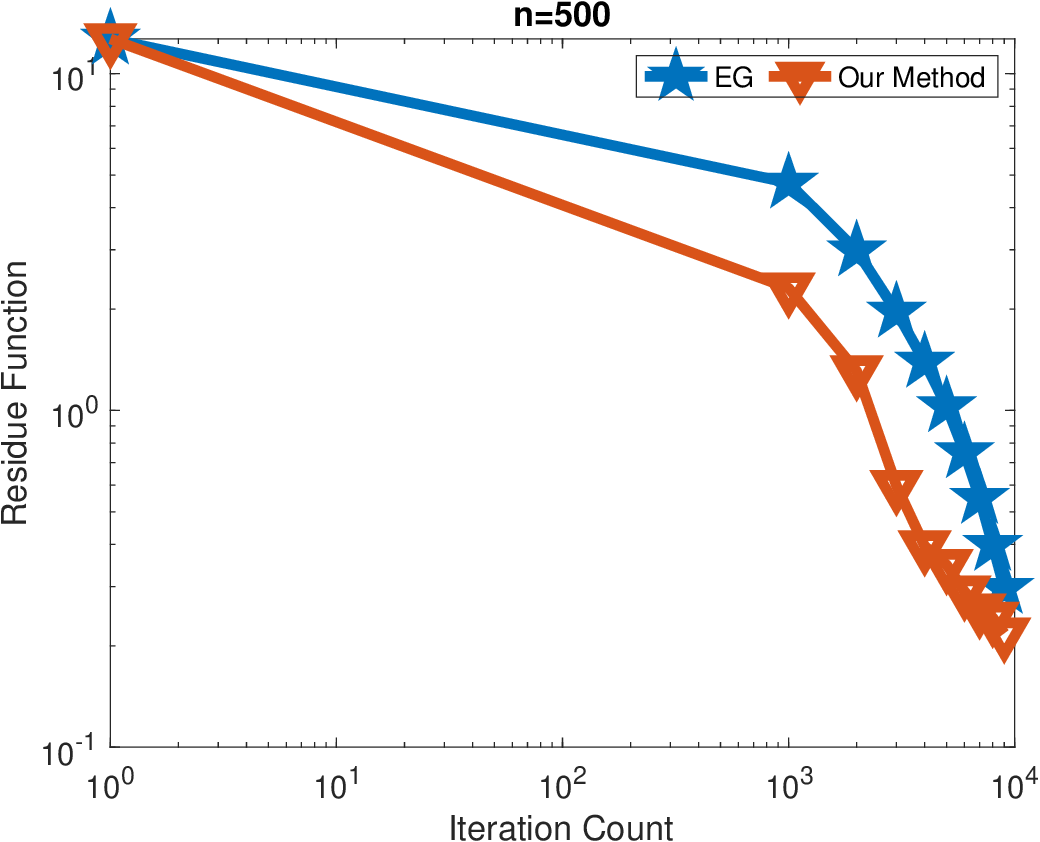}
\vspace*{-.5em}\caption{Performance of all the algorithms for $n \in \{50, 100, 200\}$ when $\rho = \frac{1}{20n}$ is set. The numerical results are presented in terms of iteration count (Top) and computational time (Bottom).}\label{fig:results}\vspace*{-1em}
\end{figure*}

Following the setup of~\cite{Jiang-2022-Generalized}, we consider a problem in the following form: 
\begin{equation}\label{prob:quartic}
\min_{z \in \br^n} \max_{y \in \br^n} \ f(z, y) = \tfrac{\rho}{24}\|z\|^4 + y^\top(Az - b), 
\end{equation}
where $\rho > 0$, the entries of $b \in \br^n$ are generated independently from $[-1, 1]$ and $A \in \br^{n \times n}$ is given by
\begin{equation*}
A = \begin{bmatrix}
1 & -1 & & & \\ & 1 & -1 & & \\ & & \ddots & \ddots & \\ & & & 1 & -1 \\ & & & & 1
\end{bmatrix}. 
\end{equation*}
This min-max optimization problem is convex-concave and has a unique global solution $z^\star = A^{-1}b$ and $y^\star = -\frac{\rho}{6}\|z^\star\|^2(A^\top)^{-1}z^\star$. It can be reformulated in the form of ME problem (cf. Eq.~\eqref{prob:main}) with 
\begin{equation*}
x = \begin{bmatrix} z \\ y \end{bmatrix}, \qquad F(x) = \begin{bmatrix} \tfrac{\rho}{6}\|z\|^2 z + A^\top y \\ -(Az-b) \end{bmatrix}. 
\end{equation*}
In our experiment, we set $\rho = \frac{1}{100n}$ where $n \in \{50, 100, 200, 500\}$ varies and use the residue function as the evaluation metric. We set the stepsize in the EG method as $0.05$ and $p = 3$ in Algorithm~\ref{Algorithm:FO}. Our results are summarized in Figure~\ref{fig:results} and we can see that our new method outperforms the EG method in terms of solution accuracy. This is because that our method can exploit the special structure of Eq.~\eqref{prob:quartic}, demonstrating the potential to design new gradient-based algorithms for solving structured application problems. Our empirical findings coincide with a line of existing works~\citep{Maddison-2018-Hamiltonian, Zhang-2018-Direct, Wilson-2019-Accelerating, Donoghue-2019-Hamiltonian, Loizou-2020-Stochastic}, which designed new accelerated gradient-based methods for solving structured problems (e.g., with strongly smooth loss functions) with provably fast global convergence rate and favorable numerical results.

\section{Conclusions}\label{sec:conclusions}
We have presented a new class of accelerated rescaled gradient systems that yield global acceleration in the general setting of  monotone equation (ME) problems. Our analysis shows that our systems are equivalent to closed-loop control systems~\citep{Lin-2023-Monotone}, which allows for establishing desired properties of solution trajectories, including global existence and uniqueness, global asymptotic weak and/or strong convergence,  and global convergence rate estimation in terms of a residue function.  Our framework provides a systematic approach to deriving existing high-order methods and additionally to derive a new suite of simple first-order methods for solving ME problems. The same global rate in terms of a residue function is established for these methods under suitable high-order Lipschitz continuity conditions. For future research, it would be of interest to bring our continuous-time perspective for understanding various ME methods into register with the Lagrangian and Hamiltonian frameworks that have proved productive~\citep{Wibisono-2016-Variational, Diakonikolas-2021-Generalized, Muehlebach-2021-Optimization}.

\section*{Acknowledgments}
This work was supported in part by the Mathematical Data Science program of the Office of Naval Research under grant number N00014-21-1-2840, and by the Vannevar Bush Faculty Fellowship program
under grant number N00014-21-1-2941, and by the European Research Council Synergy Program.

\bibliographystyle{plainnat}
\bibliography{ref}

\end{document}